\newtheorem{theorem}{Theorem}[section]
\newtheorem{lemma}[theorem]{Lemma}
\newtheorem{corollary}[theorem]{Corollary}
\theoremstyle{definition}
\theoremstyle{remark}
\newtheorem{remark}[theorem]{\bfseries{Remark}}
\numberwithin{equation}{section}
\begin{document}
	
	\setcounter{page}{1}

\title [Numerical radius inequalities of operators and operator matrices]{Some improvements of numerical radius inequalities of operators and operator matrices} 

\author{Pintu Bhunia and Kallol Paul}

\address{(Bhunia) Department of Mathematics, Jadavpur University, Kolkata 700032, West Bengal, India}
\email{pintubhunia5206@gmail.com}

\address{(Paul) Department of Mathematics, Jadavpur University, Kolkata 700032, West Bengal, India}
\email{kalloldada@gmail.com}


\thanks{First author would like to thank UGC, Govt. of India for the financial support in the form of JRF. Second author would like to thank RUSA 2.0, Jadavpur University for the partial support.}


\subjclass[2010]{Primary 47A12, Secondary 47A05, 46C05.}
\keywords{Numerical radius; B-numerical radius; A-adjoint operator; Operator matrix}

\maketitle
\begin{abstract}
We obtain upper bounds for the numerical radius of a product of Hilbert space operators which improve on the existing upper bounds. We generalize the numerical radius inequalities of $n\times n$ operator matrices by using non-negative continuous functions on $[0,\infty)$. 
We also obtain some upper and lower bounds for the $B$-numerical radius of operator matrices, where $B$ is the diagonal operator matrix whose each diagonal entry is a positive operator $A.$ We show that these bounds generalize and improve on the existing bounds.
\end{abstract}

\section{Introduction}
\noindent The purpose of the present article is to study the inequalities for the numerical radius of a product of two bounded linear operators defined on a complex Hilbert space and obtain inequalities for the bounds of the numerical radius of $ n \times n $ operator matrices. We also study the inequalities for the B-numerical radius of operator matrices, where $B$ is a diagonal operator matrix whose each diagonal entry is a positive operator A defined on a complex Hilbert space.  For this we need   the following notations and terminologies.\\
Let $\mathbb{H}$ be a nontrivial complex Hilbert space with the usual inner product $\langle .,. \rangle$, and $\|.\|$ be the norm induced from $\langle .,. \rangle$.
Let $B(\mathbb{H})$ denote the $C^*$-algebra of all bounded linear operators on $\mathbb{H}.$ For $T\in B(\mathbb{H})$, let $\|T\|$, $W(T)$ and $w(T)$ denote the operator norm, the numerical range and the numerical radius of $T$ respectively, defined as follows: 
\begin{eqnarray*}
\|T\|&=& \sup\{\|Tx\| :  x\in \mathbb{H}, \|x\|=1\},\\
W(T) &=& \{\langle Tx,x\rangle: x\in \mathbb{H}, \|x\|=1\},\\
w(T) &=& \sup\{|\lambda|: \lambda \in W(T)\}.
\end{eqnarray*}
The spectral radius of $T$, denoted as $ r(T)$, is defined as the radius of the smallest circle with centre at the origin that contains the spectrum.  It is well known that the closure of the numerical range contains the spectrum and so $ r(T) \leq w(T).$ 
It is easy to verify that $w(T)$ is a norm on $B(\mathbb{H})$ and is equivalent to the operator norm, and satisfies the following inequality
\[\frac{1}{2}\|T\|\leq w(T)\leq \|T\|.\]
Various numerical radius inequalities improving this inequality have been given in \cite{AK,A,BBP,BBP3,BBP1,BBP2,BPN1,OFK,HD,K,PB,PB2,SMY,Y}. We reserve the letters $I$ and $O$ for the identity operator and the zero operator defined on $\mathbb{H}$, respectively. A self-adjoint operator $A\in B(\mathbb{H})$ is called positive  if $\langle Ax,x \rangle \geq 0$ for all $x\in \mathbb{H}$ and  is called strictly positive if $\langle Ax,x \rangle > 0$ for all non-zero  $x\in \mathbb{H}$. For a positive (strictly positive) operator $A$ we write $A\geq 0$ $(A>0)$. In this article, we use the letter $A$ for a positive operator on $\mathbb{H}$. Clearly, a positive operator $A$ induces a positive semi-definite sesquilinear form $\langle .,. \rangle _A : \mathbb{H} \times \mathbb{H} \rightarrow \mathbb{C}$ defined as $\langle x,y \rangle _A=\langle Ax,y \rangle$, for all $x,y\in \mathbb{H}$. Let $\|.\|_A$ denote the semi-norm on $\mathbb{H}$ induced from the  sesquilinear form $\langle .,. \rangle_A,$ i.e., $\|x\|_A=\sqrt{\langle x,x \rangle_A}$, for all $x\in \mathbb{H}.$ It is easy to verify that $\|.\|_A$ is a norm on $\mathbb{H}$ if and only if $A$ is a strictly positive operator. For $T\in B(\mathbb{H})$, the A-operator semi-norm of $T$, denoted as $\|T\|_A$,  is defined as \[\|T\|_A=\sup_{x\in  \overline{R(A)},x\neq 0}\frac{\|Tx\|_A}{\|x\|_A}.\]
For $T\in B(\mathbb{H}),$ the A-minimum norm of $T$, denoted as $c_A(T)$, is defined as 
\[c_A(T)=\inf_{x\in  \overline{R(A)},x\neq 0}\frac{\|Tx\|_A}{\|x\|_A}.\]
The generalization of the numerical range, known as the A-numerical range (see \cite{BFA}), and denoted as $W_A(T)$, is defined as 
\[W_A(T) = \{\langle Tx,x\rangle_A: x\in \mathbb{H}, \|x\|_A=1\}.\] 
  The A-numerical radius $w_A(T)$ and the A-Crawford number $m_A(T)$ of $T$ are defined  as 
\[ w_A(T)=\sup\{|\lambda|: \lambda \in W_A(T)\}, \]
\[m_A(T)=\inf\{|\lambda|: \lambda \in W_A(T)\}.\]
For a given $T\in B(\mathbb{H})$, if there exists $c>0$ such that $\|Tx\|_A\leq c\|x\|_A$ for all $x\in \mathbb{H}$ then the A-numerical radius $w_A(T)<+\infty$  and satisfies the following inequality 
\[\frac{1}{2}\|T\|_A \leq w_A(T)\leq \|T\|_A.\]
In \cite{Z}, Zamani studied the A-numerical radius inequalities for the semi-Hilbertian space operators. In \cite{BPN}, we have also studied the $B$-numerical radius inequalities of $2\times 2$ operator matrices, where $B$ is a $2\times 2$ diagonal operator matrix whose each diagonal entry is a positive operator $A$. Let $B^A(\mathbb{H})=\{ T\in B(\mathbb{H}): \|T\|_A < +\infty\}.$ It is well-known that $B^A(\mathbb{H})$ is not generally a sub-algebra of $B(\mathbb{H})$.
For $T \in B(\mathbb{H})$, an operator $R \in B(\mathbb{H})$ is called an $A$-adjoint of $T$ if for all $x,y \in \mathbb{H}$, $\langle Tx,y \rangle_A=\langle x,Ry \rangle_A$, i.e., $AR=T^*A$, where $T^*$ is the adjoint of $T$. Neither the existence nor the uniqueness of A-adjoint of $T$ holds true in general. Let $B_A(\mathbb{H})$ denote the collection of all operators on $\mathbb{H}$ which admit A-adjoints. It is well-known that $B_A(\mathbb{H})$ is a sub-algebra of $B(\mathbb{H})$. Moreover, the following inclusions hold \[B_A(\mathbb{H}) \subseteq B^A(\mathbb{H})\subseteq B(\mathbb{H}),\] the equality holds  if $A$ is injective and $A$ has a closed range. For $T \in B(\mathbb{H})$, A-adjoint operator of $T$ is written as $T^{\sharp_A}$. It is useful to note that if $T \in B_A(\mathbb{H})$ then $AT^{\sharp_A}=T^*A$. An operator $T\in B_A(\mathbb{H})$ is said to be an A-self-adjoint operator if $AT$ is self-adjoint, i.e., $AT=T^*A.$ An operator $U\in  B_A(\mathbb{H})$ is said to be $A$-unitary if $\|Ux\|_A=\|x\|_A$ and $\|U^{\sharp_A}x\|_A=\|x\|_A$, for all $x\in \mathbb{H}$. We note that if $T\in B_A(\mathbb{H})$ then $T^{\sharp_A}\in B_A(\mathbb{H})$ and $(T^{\sharp_A})^{\sharp_A}=PTP$, where $P$ is an orthogonal projection onto $\overline{R(A)}$,  $R(A)$ being the range of the operator $A$. Clearly $T^{\sharp_A}T$, $TT^{\sharp_A}$ are A-self-adjoint and A-positive operators satisfying 
$\|T^{\sharp_A}T\|_A =\|TT^{\sharp_A}\|_A =\|T\|^2_A =\|T^{\sharp_A}\|^2_A.$
Also for $T,S \in  B_A(\mathbb{H})$,  $(TS)^{\sharp_A}=S^{\sharp_A}T^{\sharp_A}$, $\|TS\|_A\leq \|T\|_A\|S\|_A$ and $\|Tx\|_A\leq \|T\|_A\|x\|_A$ for all $x\in \mathbb{H}.$ For more information we refer the reader to \cite{ ACG2, AS, BFA}. For $T\in B_A(\mathbb{H})$, we write $\textit{Re}_A(T)=\frac{1}{2}(T+T^{\sharp_A})$ and $\textit{Im}_A(T)=\frac{1}{2i}(T-T^{\sharp_A})$. \\

\noindent In section $2$, we obtain new upper bounds for  the numerical radius of a product of two bounded linear operators defined on a complex Hilbert space $\mathbb{H}$ and show that these bounds improve on the existing bounds. In section $3$, we obtain some upper bounds for the numerical radius of $n\times n$ operator matrices by using two non-negative continuous functions on $[0,\infty)$, which improve on the existing upper bounds. The results of sections $2$ and $3$ are improvements and generalizations as well as corrections of the results proved by Alomari in \cite{A}. In section $4$, we obtain new  bounds for the $B$-numerical radius of $n\times n$ operator matrices, where $B$ is an $n\times n$ diagonal operator matrix whose each diagonal entry is a positive operator $A$ defined on $\mathbb{H}$. The section 4 is an extension of the  recent work \cite{BPN} on the $B$-numerical radius.

\section{\textbf{Bounds for the numerical radius of a product of two operators}}  
\noindent In this section, we obtain  upper bounds for the numerical radius of a product of two operators in $B(\mathbb{H}),$ which improve on the existing bounds in \cite{A}. To obtain these bounds, we need the following lemmas. First lemma is known as Power-Young inequality and the second one is known as McCarthy inequality.

\begin{lemma}$($\cite{SMY}$)$\label{lemma-1}
Let $a,b\geq 0$ and $\alpha, \beta>1$ be such that $\frac{1}{\alpha}+\frac{1}{\beta}=1.$ Then \[ ab\leq \frac{1}{\alpha}a^{\alpha}+\frac{1}{\beta}b^{\beta}.\] 
\end{lemma}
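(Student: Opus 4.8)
The plan is to reduce the inequality to the convexity of the exponential function (equivalently, the concavity of the natural logarithm). First I would dispose of the degenerate cases: if either $a=0$ or $b=0$, then the left-hand side $ab$ equals $0$ while the right-hand side $\frac{1}{\alpha}a^{\alpha}+\frac{1}{\beta}b^{\beta}$ is a sum of non-negative terms, so the inequality holds trivially. Thus I may assume $a,b>0$ for the remainder of the argument.

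For the main case, I would exploit the fact that the weights $\frac{1}{\alpha}$ and $\frac{1}{\beta}$ are positive and sum to $1$, so they form a convex combination. Writing $ab=\exp(\ln(ab))$ and noting that $\ln(ab)=\frac{1}{\alpha}\ln(a^{\alpha})+\frac{1}{\beta}\ln(b^{\beta})$, I would apply the convexity of $t\mapsto e^{t}$ to the convex combination of the points $\ln(a^{\alpha})$ and $\ln(b^{\beta})$ with weights $\frac{1}{\alpha}$ and $\frac{1}{\beta}$. This yields
\[
ab = \exp\!\left(\frac{1}{\alpha}\ln(a^{\alpha}) + \frac{1}{\beta}\ln(b^{\beta})\right) \leq \frac{1}{\alpha}\exp(\ln(a^{\alpha})) + \frac{1}{\beta}\exp(\ln(b^{\beta})) = \frac{1}{\alpha}a^{\alpha} + \frac{1}{\beta}b^{\beta},
\]
which is exactly the claimed bound. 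Equivalently, one could apply the concavity of $\ln$ directly to the right-hand side and use that $\ln$ is increasing.

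An alternative, entirely elementary route avoids convexity and uses single-variable calculus: fixing $b>0$ and setting $f(a)=\frac{1}{\alpha}a^{\alpha}+\frac{1}{\beta}b^{\beta}-ab$, I would solve $f'(a)=a^{\alpha-1}-b=0$ to locate the unique positive critical point $a_{0}=b^{\beta-1}$ (using $\beta-1=\frac{1}{\alpha-1}$), confirm it is a global minimum via $f''(a)=(\alpha-1)a^{\alpha-2}>0$, and then substitute, using $\alpha(\beta-1)=\beta$, to check that $f(a_{0})=b^{\beta}\big(\tfrac{1}{\alpha}+\tfrac{1}{\beta}-1\big)=0$. Since the minimum value of $f$ is $0$, the inequality $f(a)\geq 0$ follows for all $a\geq 0$.

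There is no substantive obstacle here; the result is classical. The only points demanding a little care are the separate treatment of the boundary cases $a=0$ or $b=0$, where the logarithm is undefined, and the repeated use of the defining relation $\frac{1}{\alpha}+\frac{1}{\beta}=1$, which is precisely what converts the weighted arithmetic mean back into the product $ab$ and forces the minimum value in the calculus approach to vanish identically.
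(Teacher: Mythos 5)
Your proof is correct. Note, however, that the paper offers no proof of this lemma at all: it is stated as a known result (the classical Young inequality) with a citation to the literature, so there is no argument in the paper to compare against. Both of your routes are standard and complete. The convexity argument is clean: you correctly isolate the degenerate cases $a=0$ or $b=0$ (where the logarithm is unavailable but the inequality is trivial), and then the identity $\ln(ab)=\frac{1}{\alpha}\ln(a^{\alpha})+\frac{1}{\beta}\ln(b^{\beta})$ together with convexity of $t\mapsto e^{t}$ applied to the weights $\frac{1}{\alpha}+\frac{1}{\beta}=1$ gives exactly the claim. The calculus alternative is also sound: the critical point $a_{0}=b^{\beta-1}$, the second-derivative check, and the computation $f(a_{0})=b^{\beta}\bigl(\frac{1}{\alpha}+\frac{1}{\beta}-1\bigr)=0$ (using $\alpha(\beta-1)=\beta$) are all correct, and convexity of $f$ on $(0,\infty)$ guarantees this critical point is the global minimum. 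Either argument would serve as a self-contained replacement for the citation.
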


\begin{lemma}$($\cite{K88}$)$.\label{lemma-2}
Let $A\in B(\mathbb{H})$ be positive operator, i.e., $A\geq 0.$ Then \[\langle Ax,x\rangle^p\leq \langle A^px,x\rangle,\] for all $p\geq 1$ and for all $x \in \mathbb{H}$ with $\|x\|=1.$
\end{lemma}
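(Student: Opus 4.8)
The final statement is the classical McCarthy inequality $\langle Ax,x\rangle^p \le \langle A^px,x\rangle$ for a positive operator $A$, unit vector $x$, and $p \ge 1$. The plan is to reduce the operator inequality to a one-variable convexity inequality via the spectral theorem, and then invoke Jensen's inequality. Since $A \ge 0$ is bounded, its spectrum $\sigma(A)$ is a compact subset of $[0,\infty)$, and the spectral theorem gives a spectral measure $E$ with $A = \int_{\sigma(A)} \lambda \, dE(\lambda)$. First I would fix a unit vector $x$ and define the scalar measure $d\mu(\lambda) = d\langle E(\lambda)x,x\rangle$ on $\sigma(A)$; because $\|x\| = 1$, this is a probability measure (its total mass is $\langle x,x\rangle = 1$).

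The next step is to rewrite both sides as integrals against $\mu$. Using the functional calculus one has
\[
\langle Ax,x\rangle = \int_{\sigma(A)} \lambda \, d\mu(\lambda), \qquad \langle A^p x,x\rangle = \int_{\sigma(A)} \lambda^p \, d\mu(\lambda),
\]
both integrals being finite since $\mu$ is supported on the compact set $\sigma(A) \subseteq [0,\|A\|]$. The function $\phi(t) = t^p$ is convex on $[0,\infty)$ precisely when $p \ge 1$, so Jensen's inequality applied to the probability measure $\mu$ yields
\[
\phi\!\left(\int \lambda \, d\mu(\lambda)\right) \le \int \phi(\lambda)\, d\mu(\lambda), \quad\text{i.e.,}\quad \left(\int \lambda\, d\mu(\lambda)\right)^{p} \le \int \lambda^{p}\, d\mu(\lambda).
\]
Substituting the two identities above gives exactly $\langle Ax,x\rangle^p \le \langle A^px,x\rangle$, completing the argument. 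The case $p=1$ is trivial equality and is covered by the same computation.

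Honestly, there is no deep obstacle here; the only real care needed is the measure-theoretic bookkeeping, namely verifying that $\mu$ is genuinely a probability measure and that the functional-calculus identities for $\langle Ax,x\rangle$ and $\langle A^px,x\rangle$ are legitimate. If one preferred to avoid the spectral theorem altogether, an alternative elementary route would be to establish the interpolation inequality $\langle A^\theta x,x\rangle \le \langle Ax,x\rangle^\theta \|x\|^{2(1-\theta)}$ for $\theta \in [0,1]$ (a Hölder-type estimate, which is the concave counterpart of the claim), and then apply it with $\theta = 1/p$ and $A$ replaced by $A^p$; raising the resulting bound to the $p$-th power recovers the lemma. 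Both routes carry the same essential content, namely the convexity of $t \mapsto t^p$, so I would present the spectral-theorem-plus-Jensen version as the cleanest.
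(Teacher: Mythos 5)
Your proof is correct: the measure $\mu(\cdot)=\langle E(\cdot)x,x\rangle$ is indeed a probability measure when $\|x\|=1$, the functional-calculus identities $\langle Ax,x\rangle=\int\lambda\,d\mu$ and $\langle A^px,x\rangle=\int\lambda^p\,d\mu$ are legitimate, and Jensen's inequality for the convex function $t\mapsto t^p$, $p\geq 1$, yields exactly the claim. Note that the paper itself gives no proof of this lemma --- it is quoted directly from Kittaneh \cite{K88} as the McCarthy inequality --- and your spectral-theorem-plus-Jensen argument is precisely the standard proof of that cited result, so there is no divergence of approach to report.
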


\begin{lemma}$(${\cite[Th. 5]{K88}}$)$.\label{lemma-3}
Let $X,Y\in B(\mathbb{H})$ be such that $|X|Y=Y^*|X|,$ where $|X|=(X^{*}X)^{\frac{1}{2}}$. Let $f$ and $g$ be two non-negative continuous functions on $[0,\infty)$ such that $f(t)g(t)=t$, for all $t\in [0,\infty).$ Then \[|\langle XYx,y\rangle|\leq r(Y) \|f(|X|)x\| \|g(|X^*|)y\|, \] for all $x,y \in \mathbb{H}. $
\end{lemma}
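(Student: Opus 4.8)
The plan is to exploit the polar decomposition $X=U|X|$ together with the intertwining hypothesis, reducing everything to a self-adjoint operator whose norm is exactly $r(Y)$. First I would record the standard functional-calculus identities attached to $X=U|X|$: one has $|X^*|=U|X|U^*$, and hence for every continuous $h$ with $h(0)=0$ the relations $Uh(|X|)=h(|X^*|)U$ and $h(|X|)U^*=U^*h(|X^*|)$ hold; moreover $f(|X|)g(|X|)=g(|X|)f(|X|)=|X|$ by the Stone--Weierstrass approximation, since $f(t)g(t)=t$. The first genuine use of the hypothesis is the algebraic fact that $|X|Y=Y^*|X|$ iterates: a one-line induction gives $|X|Y^n=(Y^n)^*|X|$ for every $n\ge 1$, so that $X$ and $Y^n$ satisfy exactly the same commutation relation as $X$ and $Y$.

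Next I would set up the basic bilinear identity. Writing $XY=U|X|Y$ and pushing $Y$ to the left via the hypothesis gives $XY=UY^*|X|$; splitting $|X|=g(|X|)f(|X|)$ and taking the adjoint of $UY^*g(|X|)$ yields
\[
\langle XYx,y\rangle=\langle f(|X|)x,\;g(|X|)YU^*y\rangle ,
\]
so Cauchy--Schwarz already produces a bound of the correct shape once the middle factor $g(|X|)YU^*y$ is controlled. The decisive step for replacing the crude factor $\|Y\|$ by $r(Y)$ is a similarity reduction: the relation $|X|Y=Y^*|X|$ says precisely that $V:=|X|^{1/2}Y|X|^{-1/2}$ is \emph{self-adjoint}, and since $V$ is similar to $Y$ (conjugation by $|X|^{1/2}$) we get $\|V\|=r(V)=r(Y)$. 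Substituting $Y=|X|^{-1/2}V|X|^{1/2}$ gives the clean factorization $XY=U|X|^{1/2}V|X|^{1/2}$, whence $\langle XYx,y\rangle=\langle V\,|X|^{1/2}x,\,|X|^{1/2}U^*y\rangle$, and Cauchy--Schwarz together with $\||X|^{1/2}U^*y\|=\|U^*|X^*|^{1/2}y\|\le\||X^*|^{1/2}y\|$ delivers the inequality in the symmetric case $f(t)=g(t)=\sqrt{t}$.

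The hard part, and where I expect the main obstacle, is twofold. The innocuous issue is that $|X|^{1/2}$ need not be invertible, so $V$ is defined only on $\overline{R(|X|)}$; I would circumvent this by running the argument with $|X|+\varepsilon I$ in place of $|X|$, checking that the regularized operator still intertwines $Y$ with $Y^*$, and then letting $\varepsilon\to 0^+$ using continuity of the functional calculus. The substantive difficulty is reconciling the symmetric $|X|^{1/2}\!\cdot\!|X|^{1/2}$ factorization forced by the spectral-radius reduction with the asymmetric prescribed splitting $f(t)g(t)=t$: the self-adjoint engine naturally distributes $|X|^{1/2}$ evenly between $x$ and $y$, whereas the statement demands $f(|X|)$ on the $x$-side and $g(|X^*|)$ on the $y$-side. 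I would attempt to bridge this gap by the power method, applying the basic identity to $Y^n$ (legitimate by the iterated relation $|X|Y^n=(Y^n)^*|X|$) to obtain $|\langle XY^nx,y\rangle|\le \|Y^n\|\,\|f(|X|)x\|\,\|g(|X^*|)y\|$ and then extracting $n$-th roots so that $\|Y^n\|^{1/n}\to r(Y)$; the remaining care is to verify that the $Y$-factor can indeed be separated from the functional-calculus weights without destroying the $f$/$g$ assignment, which is the crux of the whole argument.
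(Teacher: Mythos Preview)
The paper does not supply a proof here; the lemma is simply quoted from \cite{K88}. Your route for the symmetric splitting $f(t)=g(t)=t^{1/2}$ is correct and is essentially Kittaneh's: the similarity $V=|X|^{1/2}Y|X|^{-1/2}$ (made rigorous via $|X|+\varepsilon I$) is self-adjoint with $\|V\|=r(V)=r(Y)$, and $XY=U|X|^{1/2}V|X|^{1/2}$ together with Cauchy--Schwarz and $\||X|^{1/2}U^*y\|=\||X^*|^{1/2}y\|$ gives the bound.

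However, the obstacle you isolate for asymmetric $f,g$ is not merely technical --- it is genuine, and your proposed power-method repair cannot work. From $|\langle XY^{n}x,y\rangle|\le\|Y^{n}\|\,\|f(|X|)x\|\,\|g(|X^*|)y\|$ there is no passage back to $|\langle XYx,y\rangle|$ by taking $n$-th roots, since the left-hand side is not $|\langle XYx,y\rangle|^{n}$. In fact the inequality as stated is \emph{false} for general $f,g$ and $x\neq y$: take $X=|X|=\operatorname{diag}(1,4)$, $Y=\bigl(\begin{smallmatrix}0&2\\ 1/2&0\end{smallmatrix}\bigr)$, $f(t)=t$, $g(t)\equiv 1$, $x=e_{1}$, $y=e_{2}$. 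Then $|X|Y=Y^{*}|X|=\bigl(\begin{smallmatrix}0&2\\ 2&0\end{smallmatrix}\bigr)$, $r(Y)=1$, $\|f(|X|)x\|=\|g(|X^*|)y\|=1$, but $|\langle XYx,y\rangle|=2$. So no argument can deliver the full statement; what survives (and what the paper actually needs) are the symmetric case $f=g=\sqrt{\,\cdot\,}$, and the case $Y=I$, where the claim reduces to Kittaneh's mixed Schwarz inequality $|\langle Xx,y\rangle|\le\|f(|X|)x\|\,\|g(|X^*|)y\|$. Your instinct that the $f/g$ asymmetry was ``the crux of the whole argument'' was exactly right.
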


The  following inequality for the numerical radius of $2 \times 2$ operator matrices follows from   \cite[Th. 2.5]{BBP3}.
\begin{lemma}\label{lemma-4} 
Let $X,Y \in B(\mathbb{H})$ and $T=\left(\begin{array}{cc}
O&X\\
Y&O
\end{array}\right).$ Then
\[w^2(T)\leq \frac{1}{4}\Big \| |X|^2+|Y^*|^2\Big\|+\frac{1}{2}w(YX).\]
In particular, 
\[w^2(X)\leq \frac{1}{4}\Big \||X|^2+|X^*|^2\Big\|+\frac{1}{2}w(X^2).\]
\end{lemma}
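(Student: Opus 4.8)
\noindent The plan is to exploit the well-known formula $w(S)=\sup_{\theta\in\mathbb{R}}\big\|\mathrm{Re}(e^{i\theta}S)\big\|$, where $\mathrm{Re}(S)=\tfrac12(S+S^*)$, applied to the off-diagonal matrix $T$. First I would compute, for each real $\theta$,
\[
\mathrm{Re}(e^{i\theta}T)=\frac{1}{2}\begin{pmatrix} O & e^{i\theta}X+e^{-i\theta}Y^* \\ (e^{i\theta}X+e^{-i\theta}Y^*)^* & O \end{pmatrix},
\]
so that $\mathrm{Re}(e^{i\theta}T)$ is a self-adjoint off-diagonal block matrix whose single nonzero corner is $Z_\theta:=e^{i\theta}X+e^{-i\theta}Y^*$.

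Then, using the standard fact that a self-adjoint matrix of the form $\left(\begin{smallmatrix} O & Z \\ Z^* & O\end{smallmatrix}\right)$ has norm exactly $\|Z\|$ (its square being the block-diagonal $\mathrm{diag}(ZZ^*,Z^*Z)$), I obtain
\[
w(T)=\frac{1}{2}\sup_{\theta}\|Z_\theta\|,\qquad\text{hence}\qquad w^2(T)=\frac{1}{4}\sup_{\theta}\|Z_\theta\|^2.
\]
Next I would write $\|Z_\theta\|^2=\|Z_\theta^*Z_\theta\|$ and expand the product, using $X^*Y^*=(YX)^*$, to arrive at
\[
Z_\theta^*Z_\theta=|X|^2+|Y^*|^2+e^{2i\theta}YX+e^{-2i\theta}(YX)^*=|X|^2+|Y^*|^2+2\,\mathrm{Re}\big(e^{2i\theta}YX\big).
\]

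The final step is the triangle inequality: since $|X|^2+|Y^*|^2$ is independent of $\theta$,
\[
\|Z_\theta\|^2\le\big\||X|^2+|Y^*|^2\big\|+2\big\|\mathrm{Re}(e^{2i\theta}YX)\big\|,
\]
and taking the supremum over $\theta$, together with the reparametrization $\sup_\theta\|\mathrm{Re}(e^{2i\theta}YX)\|=w(YX)$, yields the claimed bound $w^2(T)\le\frac14\||X|^2+|Y^*|^2\|+\frac12\,w(YX)$. For the ``in particular'' statement I would set $Y=X$; then $Z_\theta=e^{i\theta}X+e^{-i\theta}X^*=2\,\mathrm{Re}(e^{i\theta}X)$ forces $w(T)=\sup_\theta\|\mathrm{Re}(e^{i\theta}X)\|=w(X)$, so the general inequality specializes directly. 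The only genuinely delicate point is justifying the interchange of the supremum with the triangle-inequality split; this is clean here precisely because the first summand is constant in $\theta$, so nothing is lost beyond the triangle inequality itself.
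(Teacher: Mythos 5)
Your proof is correct. Note, though, that there is no in-paper proof to compare it against: the paper imports this lemma wholesale, remarking only that it ``follows from \cite[Th.~2.5]{BBP3}.'' What you have written is therefore a genuinely self-contained derivation of a fact the paper merely cites. Your route is the standard machinery behind bounds of this kind: Yamazaki's identity $w(T)=\sup_{\theta\in\mathbb{R}}\|\mathrm{Re}(e^{i\theta}T)\|$, the observation that $\mathrm{Re}(e^{i\theta}T)$ is again off-diagonal with corner $Z_\theta=e^{i\theta}X+e^{-i\theta}Y^{*}$, the norm identity $\bigl\|\bigl(\begin{smallmatrix}O&Z\\ Z^{*}&O\end{smallmatrix}\bigr)\bigr\|=\|Z\|$ for self-adjoint off-diagonal block matrices, the $C^{*}$-identity $\|Z_\theta\|^{2}=\|Z_\theta^{*}Z_\theta\|$, and the expansion $Z_\theta^{*}Z_\theta=|X|^{2}+|Y^{*}|^{2}+2\,\mathrm{Re}\bigl(e^{2i\theta}YX\bigr)$ using $X^{*}Y^{*}=(YX)^{*}$. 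Each of these steps is valid, as is the specialization $Y=X$, where $Z_\theta=2\,\mathrm{Re}(e^{i\theta}X)$ gives $w(T)=w(X)$ and hence the second inequality. In effect you have reconstructed, in block form, the Abu-Omar--Kittaneh-style argument for the scalar case, which is presumably how the cited theorem in \cite{BBP3} is proved; what your write-up buys over the paper's citation is a short, elementary, self-contained argument, and as a bonus it isolates the exact formula $w(T)=\frac{1}{2}\sup_{\theta}\|e^{i\theta}X+e^{-i\theta}Y^{*}\|$, with the inequality of the lemma entering only at the final triangle-inequality step. One quibble: the step you call ``genuinely delicate'' is not delicate at all---$\sup_{\theta}\,(a+g(\theta))=a+\sup_{\theta}g(\theta)$ holds trivially for any constant $a$, and even without constancy the triangle inequality passes through suprema with no loss of validity (only of sharpness); likewise the reparametrization $\sup_{\theta}\|\mathrm{Re}(e^{2i\theta}YX)\|=w(YX)$ is harmless because $\theta\mapsto 2\theta$ maps $\mathbb{R}$ onto $\mathbb{R}$.
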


We are now in a position to obtain the desired bound for the numerical radius of a product of two operators.

\begin{theorem}\label{theorem-1}
Let $X,Y\in B(\mathbb{H})$ be such that $|X|Y=Y^*|X|.$ Let $f$ and $g$ be two non-negative continuous functions on $[0,\infty)$ such that $f(t)g(t)=t$, for all $t\in [0,\infty).$ Then 
\begin{eqnarray*}
w^p(XY)&\leq& 2r^p(Y)~ w \left(\begin{array}{cc}
O& \frac{1}{\alpha}f^{p\alpha}(|X|)\\
\frac{1}{\beta}g^{p\beta}(|X^*|) & O
\end{array}\right)\\
&\leq& r^p(Y) \left[\Big\|  \frac{1}{\alpha^2}|f^{p\alpha}(|X|)|^2+\frac{1}{\beta^2}|g^{p\beta}(|X^*|)|^2 \Big\|+\frac{2}{\alpha \beta}\left\| g^{p\beta}(|X^*|) f^{p\alpha}(|X|)\right\| \right]^{\frac{1}{2}},
\end{eqnarray*}
where $p\geq 1$ and  $\alpha, \beta >1$ be such that $\frac{1}{\alpha}+\frac{1}{\beta}=1$, $p\alpha\geq 2,$ $p\beta \geq 2.$
\end{theorem}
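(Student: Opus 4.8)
The plan is to run the three input lemmas in sequence and then read off the answer as a diagonal numerical-radius value of the stated matrix. Concretely, I would start from Lemma~\ref{lemma-3} in the diagonal case $y=x$, push the resulting product of norms through the Power--Young inequality (Lemma~\ref{lemma-1}), convert the two powers of norms into inner products against positive operators via the McCarthy inequality (Lemma~\ref{lemma-2}), and finally recognize the sum so obtained as twice $\langle Tu,u\rangle$ for a suitable unit vector $u$.

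First I would fix a unit vector $x\in\mathbb{H}$. Since the hypothesis $|X|Y=Y^{*}|X|$ and $f(t)g(t)=t$ are in force, Lemma~\ref{lemma-3} with $y=x$ gives
\[|\langle XYx,x\rangle|\le r(Y)\,\|f(|X|)x\|\,\|g(|X^{*}|)x\|.\]
Raising to the power $p\ge1$ and viewing $\|f(|X|)x\|^{p}$ and $\|g(|X^{*}|)x\|^{p}$ as the two factors, Lemma~\ref{lemma-1} with exponents $\alpha,\beta$ produces
\[|\langle XYx,x\rangle|^{p}\le r^{p}(Y)\Big(\tfrac1\alpha\|f(|X|)x\|^{p\alpha}+\tfrac1\beta\|g(|X^{*}|)x\|^{p\beta}\Big).\]
Now I would use that $f(|X|)$ and $g(|X^{*}|)$ are positive (hence self-adjoint) to write $\|f(|X|)x\|^{p\alpha}=\langle f(|X|)^{2}x,x\rangle^{p\alpha/2}$; because $p\alpha/2\ge1$ (this is exactly where $p\alpha\ge2$ is used) and $f(|X|)^{2}\ge0$, Lemma~\ref{lemma-2} yields
\[\|f(|X|)x\|^{p\alpha}\le\big\langle\big(f(|X|)^{2}\big)^{p\alpha/2}x,x\big\rangle=\langle f^{p\alpha}(|X|)x,x\rangle,\]
and symmetrically $\|g(|X^{*}|)x\|^{p\beta}\le\langle g^{p\beta}(|X^{*}|)x,x\rangle$ using $p\beta\ge2$, so that
\[|\langle XYx,x\rangle|^{p}\le r^{p}(Y)\Big(\tfrac1\alpha\langle f^{p\alpha}(|X|)x,x\rangle+\tfrac1\beta\langle g^{p\beta}(|X^{*}|)x,x\rangle\Big).\]

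To close the first inequality I would interpret the bracket via $T=\left(\begin{smallmatrix}O&\frac1\alpha f^{p\alpha}(|X|)\\[1mm]\frac1\beta g^{p\beta}(|X^{*}|)&O\end{smallmatrix}\right)$ on $\mathbb{H}\oplus\mathbb{H}$: for the unit vector $u=\tfrac1{\sqrt2}(x,x)$ one has $\langle Tu,u\rangle=\tfrac12\big(\tfrac1\alpha\langle f^{p\alpha}(|X|)x,x\rangle+\tfrac1\beta\langle g^{p\beta}(|X^{*}|)x,x\rangle\big)$, which is real and nonnegative because both entries are positive operators, hence is at most $w(T)$. Thus $|\langle XYx,x\rangle|^{p}\le 2r^{p}(Y)\,w(T)$, and taking the supremum over unit $x$ gives the first displayed bound. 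The second bound then follows at once by applying Lemma~\ref{lemma-4} to $T$ with $\frac1\alpha f^{p\alpha}(|X|)$ and $\frac1\beta g^{p\beta}(|X^{*}|)$ in the roles of $X$ and $Y$: both are positive, so the modulus of each equals the operator itself, the cross term $w(\cdot)$ is dominated by $\|\cdot\|$, and multiplying the resulting estimate for $w(T)$ by $2r^{p}(Y)$ (and absorbing the factor $4$ inside the square root) reproduces the stated right-hand side.

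The calculation is essentially routine, and the only genuinely delicate point is the joint use of Young followed by McCarthy in precisely that order: the hypotheses $p\alpha\ge2$ and $p\beta\ge2$ are exactly what guarantee that the exponents $p\alpha/2$ and $p\beta/2$ stay $\ge1$ after Young has split the product, so that Lemma~\ref{lemma-2} is applicable; with smaller exponents McCarthy would fail. Recognizing the final sum as a diagonal numerical-radius value of $T$ is the one remaining observation, but it is immediate once the positivity of the two operator entries is noted.
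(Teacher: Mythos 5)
Your proof is correct, and its skeleton is the same as the paper's: Lemma~\ref{lemma-3} with $y=x$, Power--Young, McCarthy, interpretation via the off-diagonal $2\times 2$ operator matrix $T$, and finally Lemma~\ref{lemma-4} (with $w(\cdot)\leq\|\cdot\|$ on the cross term) for the second bound. There are two points where you deviate, both to your advantage. First, you take the $p$-th power \emph{before} applying Young, so that Young with exponents $\alpha,\beta$ acting on $\|f(|X|)x\|^{p}\,\|g(|X^*|)x\|^{p}$ directly produces the exponents $p\alpha$, $p\beta$; the paper instead applies Young first and then needs a separate convexity-of-$t^{p}$ step to push the power inside the sum. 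The two orderings give the same intermediate estimate, so this is cosmetic. Second, and more substantively, the paper passes from $\frac{1}{\alpha}\langle f^{p\alpha}(|X|)x,x\rangle+\frac{1}{\beta}\langle g^{p\beta}(|X^*|)x,x\rangle$ to $2\,w(T)$ by bounding it by the operator norm $\bigl\|\frac{1}{\alpha}f^{p\alpha}(|X|)+\frac{1}{\beta}g^{p\beta}(|X^*|)\bigr\|$ and then invoking the external result \cite[Cor.~3]{AK}, whereas you obtain the same conclusion directly by evaluating the quadratic form of $T$ at the unit vector $u=\tfrac{1}{\sqrt 2}(x,x)$, which gives exactly half the sum in question and is trivially dominated by $w(T)$. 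Your route is self-contained (it amounts to proving the special case of the Abu-Omar--Kittaneh corollary that is actually needed, namely for a sum of two positive operators) and makes transparent where the factor $2$ comes from; the paper's route is shorter on the page but imports a citation. Your closing of the second inequality via Lemma~\ref{lemma-4}, using positivity of both entries to identify $|X'|^2=X'^2$ and $|Y'^*|^2=Y'^2$ and absorbing the factor $4$ into the square root, matches the paper exactly.
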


\begin{proof}
Let $x\in \mathbb{H}$ with $\|x\|=1.$ Then from Lemma \ref{lemma-3}, we get  
\begin{eqnarray*}
|\langle XYx,x\rangle|&\leq&  r(Y) \|f(|X|)x\| \|g(|X^*|)x\|\\
\Rightarrow |\langle XYx,x\rangle|&\leq& r(Y)\left[ \frac{1}{\alpha} \|f(|X|)x\|^{\alpha} +\frac{1}{\beta}\|g(|X^*|)\|^{\beta}\right],~~ \textit{using Lemma }\ref{lemma-1}\\
\Rightarrow |\langle XYx,x\rangle|&\leq& r(Y) \left[\frac{1}{\alpha} \langle f^2(|X|)x,x \rangle^{\frac{\alpha}{2}}+ \frac{1}{\beta} \langle g^2(|X^*|)x,x \rangle^{\frac{\beta}{2}} \right]\\
\Rightarrow |\langle XYx,x\rangle|^p &\leq&  r^p(Y) \left[\frac{1}{\alpha} \langle f^2(|X|)x,x \rangle^{\frac{\alpha}{2}}+ \frac{1}{\beta} \langle g^2(|X^*|)x,x \rangle^{\frac{\beta}{2}} \right]^p\\
\Rightarrow |\langle XYx,x\rangle|^p &\leq&  r^p(Y) \left[\frac{1}{\alpha} \langle f^2(|X|)x,x \rangle^{\frac{p\alpha}{2}}+ \frac{1}{\beta} \langle g^2(|X^*|)x,x \rangle^{\frac{p\beta}{2}} \right], ~~\textit{using convexity of} ~~t^p \\
\Rightarrow |\langle XYx,x\rangle|^p&\leq&  r^p(Y) \left[\frac{1}{\alpha} \langle f^{p\alpha}(|X|)x,x \rangle + \frac{1}{\beta} \langle g^{p\beta}(|X^*|)x,x \rangle \right], ~~\textit{using Lemma }\ref{lemma-2}\\
\Rightarrow |\langle XYx,x\rangle|^p&\leq & r^p(Y) \left[ \langle \left(\frac{1}{\alpha}f^{p\alpha}(|X|)+\frac{1}{\beta}g^{p\beta}(|X^*|)\right)x,x\rangle \right]\\
\Rightarrow |\langle XYx,x\rangle|^p &\leq & r^p(Y)w \left(\frac{1}{\alpha}f^{p\alpha}(|X|)+\frac{1}{\beta}g^{p\beta}(|X^*|)\right)
 \end{eqnarray*}
\begin{eqnarray*}
\Rightarrow |\langle XYx,x\rangle|^p&\leq & r^p(Y)  \left \|\frac{1}{\alpha}f^{p\alpha}(|X|)+\frac{1}{\beta}g^{p\beta}(|X^*|)\right \|\\
\Rightarrow |\langle XYx,x\rangle|^p&\leq & 2r^p(Y) w \left(\begin{array}{cc}
O& \frac{1}{\alpha}f^{p\alpha}(|X|)\\
\frac{1}{\beta}g^{p\beta}(|X^*|) & O
\end{array}\right),  ~~\textit{ using  ~~\cite[Cor. 3]{AK}}\\
\Rightarrow |\langle XYx,x\rangle|^p &\leq& r^p(Y) \Bigg[\Big\|  \frac{1}{\alpha^2}|f^{p\alpha}(|X|)|^2+\frac{1}{\beta^2}|g^{p\beta}(|X^*|)|^2 \Big\| \\ 
&& \hspace{2cm} +\frac{2}{\alpha \beta}\left\| g^{p\beta}(|X^*|) f^{p\alpha}(|X|)\right\| \Bigg]^{\frac{1}{2}}, ~~\textit{using Lemma} ~~\ref{lemma-4}.
\end{eqnarray*}
Taking supremum over $x\in \mathbb{H}, \|x\|=1$, we get the desired bound and this completes the proof.
\end{proof}

In particular, if we take $\alpha=\beta=2$ and $p=1$ in Theorem \ref{theorem-1}, then we get the following inequality.

\begin{corollary}\label{corollary-1}
Let $X,Y \in B(\mathbb{H})$ be such that $|X|Y=Y^*|X|.$ Then 
\[w(XY)\leq r(Y) ~~ w\left(\begin{array}{cc}
O& f^2(|X|)\\
g^2(|X^*|)&O
\end{array}\right),\] where $f$ and $g$ are two non-negative continuous functions on $[0,\infty)$ such that  $f(t)g(t)=t, $ for all $t \in [0,\infty)$. \\
\end{corollary}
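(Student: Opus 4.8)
The plan is to obtain this corollary as a direct specialization of Theorem \ref{theorem-1}. First I would fix the parameters $\alpha=\beta=2$ and $p=1$ and verify that they satisfy every hypothesis of the theorem: one has $\frac{1}{\alpha}+\frac{1}{\beta}=\frac{1}{2}+\frac{1}{2}=1$, while $p=1\geq 1$ and $p\alpha=p\beta=2\geq 2$, so all the constraints on the exponents are met and the theorem applies. The structural hypothesis $|X|Y=Y^*|X|$ needed to invoke Theorem \ref{theorem-1} (ultimately Lemma \ref{lemma-3}) is exactly the one assumed in the corollary, so there is nothing extra to check there.

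Substituting these values, the exponents $p\alpha$ and $p\beta$ both collapse to $2$, and the coefficients $\frac{1}{\alpha}$ and $\frac{1}{\beta}$ both become $\frac{1}{2}$. Hence the first inequality in the conclusion of Theorem \ref{theorem-1} reads
\[
w(XY)\leq 2\,r(Y)\,w\left(\begin{array}{cc} O& \frac{1}{2}f^2(|X|)\\ \frac{1}{2}g^2(|X^*|) & O \end{array}\right).
\]

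The one point that requires care is reconciling the leading factor $2$ with the $\frac{1}{2}$ coefficients sitting inside the operator matrix. I would resolve this using the positive homogeneity of the numerical radius, namely $w(cT)=|c|\,w(T)$ for a scalar $c$. Factoring $\frac{1}{2}$ out of the off-diagonal entries gives $w\left(\begin{array}{cc} O& \frac{1}{2}f^2(|X|)\\ \frac{1}{2}g^2(|X^*|) & O \end{array}\right)=\frac{1}{2}\,w\left(\begin{array}{cc} O& f^2(|X|)\\ g^2(|X^*|) & O \end{array}\right)$, so the $2$ and the $\frac{1}{2}$ cancel and the stated bound follows at once.

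I do not anticipate any genuine obstacle, since the argument is a mechanical substitution followed by a one-line homogeneity computation. The only subtlety worth double-checking is that the boundary choice $p\alpha=p\beta=2$ still respects the lower bounds $p\alpha\geq 2$ and $p\beta\geq 2$ (they hold with equality), which is precisely what keeps the McCarthy step (Lemma \ref{lemma-2}) and the application of Lemma \ref{lemma-4} inside the proof of Theorem \ref{theorem-1} valid for these parameters; with equality the second moment inequalities become identities in the relevant places, so the specialization is legitimate.
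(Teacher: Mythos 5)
Your proposal is correct and is exactly the paper's own route: the corollary is stated there as the specialization $\alpha=\beta=2$, $p=1$ of Theorem \ref{theorem-1}, and your homogeneity step $w\left(\begin{array}{cc} O& \frac{1}{2}f^2(|X|)\\ \frac{1}{2}g^2(|X^*|) & O \end{array}\right)=\frac{1}{2}\,w\left(\begin{array}{cc} O& f^2(|X|)\\ g^2(|X^*|) & O \end{array}\right)$ is the (implicit) reconciliation of the factor $2$ with the $\frac{1}{2}$ coefficients. Nothing further is needed.
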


Considering  $f(t)=g(t)=t^{\frac{1}{2}}$ in  Corollary \ref{corollary-1} and noting that $f^2(|X|)=|X|$, $g^2(|X^*|)=|X^*|$ and $r(Y)\leq w(Y)$, we get the following inequality by using Lemma \ref{lemma-4}.

\begin{corollary}\label{corollary-2}
Let $X,Y \in B(\mathbb{H})$ be such that $|X|Y=Y^*|X|.$ Then 
\[w(XY)\leq \frac{1}{4}\Big( \left\| |X|^2+|X^*|^2\right\|+2\left\|X^2\right\|\Big)^{\frac{1}{2}}~~ \Big( \left\| |Y|^2+|Y^*|^2\right\|+2\left\|Y^2\right\|\Big)^{\frac{1}{2}}.\] 
\end{corollary}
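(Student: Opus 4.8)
The plan is to specialize Corollary~\ref{corollary-1} and then bound the two resulting factors separately by means of Lemma~\ref{lemma-4}. Taking $f(t)=g(t)=t^{1/2}$ gives $f^{2}(|X|)=|X|$ and $g^{2}(|X^{*}|)=|X^{*}|$, so Corollary~\ref{corollary-1} yields at once
\[
w(XY)\leq r(Y)\; w\left(\begin{array}{cc} O & |X|\\ |X^{*}| & O\end{array}\right).
\]
Since $r(Y)\leq w(Y)$, the right-hand side is a product of a factor depending only on $Y$ and a factor depending only on $X$. The whole proof then reduces to the two estimates $w(Y)\leq \tfrac12\big(\||Y|^{2}+|Y^{*}|^{2}\|+2\|Y^{2}\|\big)^{1/2}$ and $w\big(\begin{smallmatrix} O & |X|\\ |X^{*}| & O\end{smallmatrix}\big)\leq \tfrac12\big(\||X|^{2}+|X^{*}|^{2}\|+2\|X^{2}\|\big)^{1/2}$; multiplying them produces the factor $\tfrac14$ and the claimed inequality.

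For the $Y$-factor I would invoke the ``in particular'' clause of Lemma~\ref{lemma-4} applied to $Y$, namely $w^{2}(Y)\leq \tfrac14\||Y|^{2}+|Y^{*}|^{2}\|+\tfrac12\,w(Y^{2})$, and then weaken $w(Y^{2})$ to $\|Y^{2}\|$ using $w(\cdot)\leq\|\cdot\|$. This delivers exactly the desired $Y$-factor. For the $X$-factor I would apply the main inequality of Lemma~\ref{lemma-4} to the matrix with upper-right entry $|X|$ and lower-left entry $|X^{*}|$. Because both entries are positive, the modulus-squared terms occurring in the lemma collapse to $|X|^{2}$ and $|X^{*}|^{2}$, and one obtains
\[
w^{2}\left(\begin{array}{cc} O & |X|\\ |X^{*}| & O\end{array}\right)\leq \frac14\big\||X|^{2}+|X^{*}|^{2}\big\|+\frac12\, w\big(|X^{*}|\,|X|\big).
\]

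To finish I must replace $w(|X^{*}||X|)$ by $\|X^{2}\|$, and this is the step I expect to be the main obstacle, since it is the only point where the hypotheses on $X$ alone must be converted into the quantity $\|X^{2}\|$ appearing in the statement. It suffices to establish the norm identity $\||X^{*}|\,|X|\|=\|X^{2}\|$, for then $w(|X^{*}||X|)\leq \||X^{*}||X|\|=\|X^{2}\|$ closes the gap. I would prove this identity via the polar decomposition $X=U|X|$: then $|X^{*}|=U|X|U^{*}$, and setting $B=|X|\,U\,|X|$ one checks, using $U^{*}U|X|=|X|$, that $B^{*}B=X^{*2}X^{2}$ while $BB^{*}=|X|\,|X^{*}|^{2}\,|X|=(|X^{*}||X|)^{*}(|X^{*}||X|)$. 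Since $\|B^{*}B\|=\|BB^{*}\|=\|B\|^{2}$, comparing these two identities forces $\|X^{2}\|^{2}=\||X^{*}||X|\|^{2}$, which is precisely what is required. The remaining manipulations are routine, so the genuine content of the argument is this norm identity together with the clean factorization supplied by Corollary~\ref{corollary-1} and Lemma~\ref{lemma-4}.
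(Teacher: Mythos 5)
Your proof is correct and takes essentially the same route as the paper: the paper likewise specializes Corollary \ref{corollary-1} with $f(t)=g(t)=t^{1/2}$, uses $r(Y)\leq w(Y)$, and applies Lemma \ref{lemma-4} (its main inequality to the matrix $\bigl(\begin{smallmatrix} O & |X| \\ |X^*| & O \end{smallmatrix}\bigr)$ and its ``in particular'' clause to $Y$). The only difference is that you explicitly prove the norm identity $\bigl\||X^*|\,|X|\bigr\|=\|X^2\|$ needed to turn $w(|X^*|\,|X|)$ into $\|X^2\|$ --- a known fact of Kittaneh that the paper's one-sentence proof leaves implicit --- and your polar-decomposition argument for it is valid.
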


\begin{remark}\label{remark-p}
For any $T\in B(\mathbb{H})$, we have $\|T^2\|=\|T^2\|^{\frac{1}{2}}\|T^2\|^{\frac{1}{2}}\leq \|T\|\|T^2\|^{\frac{1}{2}}.$ Therefore, using \cite[Lemma~~ 7]{FK3} we have
\begin{eqnarray*}
\left \| |T|^2+|T^*|^2 \right\|+2\left\|T^2\right\| &\leq & \|T^2\|+\|T\|^2+2\left\|T^2\right\| \\
&\leq & \|T^2\|+\|T\|^2+2\|T\|\|T^2\|^{\frac{1}{2}}\\
&=& (\|T\|+\|T^2\|^{\frac{1}{2}})^2.
\end{eqnarray*}
Thus the bound obtained in Corollary \ref{corollary-2} improves on the bound obtained by Alomari in \cite[Cor. 3.2]{A}.  
\end{remark}

To prove next theorem, we need Buzano's inequality, which is a generalization of Cauchy-Schwarz inequality.

\begin{lemma}{$($\cite{B}$)$.} \label{lemma-5}
Let $a,b,x\in \mathbb{H}.$ Then
\[|\langle a,x\rangle \langle x,b\rangle|\leq \frac{\|a\|\|b\|+|\langle a,b\rangle|}{2}\|x\|^2.\]
\end{lemma}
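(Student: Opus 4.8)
The plan is to prove this homogeneous form of Buzano's inequality by reducing to the case of a unit vector and then exploiting a reflection operator. First I would dispose of the trivial case $x=0$, where both sides vanish, and then normalize: since both sides scale as $\|x\|^2$ under $x\mapsto \lambda x$, it suffices to establish the inequality when $\|x\|=1$, in which case the claim reads $|\langle a,x\rangle\langle x,b\rangle|\leq \tfrac{1}{2}\left(\|a\|\|b\|+|\langle a,b\rangle|\right)$.

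The key step is to rewrite the left-hand side through the orthogonal projection $P$ onto $\mathrm{span}\{x\}$, given by $Py=\langle y,x\rangle x$. Since $\|x\|=1$, one has $\langle a,x\rangle\langle x,b\rangle=\langle \langle a,x\rangle x,\,b\rangle=\langle Pa,b\rangle$. I would then write $P=\tfrac{1}{2}\bigl(I+(2P-I)\bigr)$ and observe that the reflection $U:=2P-I$ is a self-adjoint involution: it equals $+1$ on $\mathrm{span}\{x\}$ and $-1$ on its orthogonal complement, so $U^{*}U=U^{2}=I$, which makes $U$ an isometry.

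With this in hand the estimate is immediate. From $\langle Pa,b\rangle=\tfrac{1}{2}\langle a,b\rangle+\tfrac{1}{2}\langle Ua,b\rangle$, the triangle inequality and the Cauchy--Schwarz inequality give $|\langle Pa,b\rangle|\leq \tfrac{1}{2}|\langle a,b\rangle|+\tfrac{1}{2}\|Ua\|\,\|b\|$, and since $U$ preserves norms, $\|Ua\|=\|a\|$. This yields $|\langle a,x\rangle\langle x,b\rangle|\leq \tfrac{1}{2}\left(\|a\|\|b\|+|\langle a,b\rangle|\right)$ for unit $x$, and rescaling recovers the stated inequality with the factor $\|x\|^{2}$ in full generality.

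The only point requiring genuine care, and hence the main obstacle, is the verification that $U=2P-I$ is norm-preserving; everything else is routine bookkeeping. I would confirm this either by the spectral description of the one-dimensional projection $P$, or by the direct computation $\|Ua\|^{2}=\langle(2P-I)a,(2P-I)a\rangle=\|a\|^{2}$ using $P^{*}=P=P^{2}$. An alternative route avoiding operators is to decompose $a=\langle a,x\rangle x+a^{\perp}$ with $a^{\perp}\perp x$ and expand both sides directly, but the reflection argument is cleaner and generalizes transparently.
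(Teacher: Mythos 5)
Your proof is correct and complete. Note that the paper itself gives no proof of this lemma --- it is quoted directly from Buzano's paper \cite{B} --- so there is nothing internal to compare against; your argument is the standard reflection proof of Buzano's inequality. Every step checks out: the reduction to $\|x\|=1$ by homogeneity is valid since both sides scale by $|\lambda|^2$; the identity $\langle a,x\rangle\langle x,b\rangle=\langle Pa,b\rangle$ holds for the rank-one projection $P y=\langle y,x\rangle x$ (using $\|x\|=1$ to get $P^2=P=P^*$); and the splitting $P=\tfrac12\bigl(I+U\bigr)$ with $U=2P-I$ satisfying $U^*U=U^2=I$ delivers the bound via the triangle and Cauchy--Schwarz inequalities, since $\|Ua\|=\|a\|$.
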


\begin{theorem}\label{theorem-2}
Let $X,Y\in B(\mathbb{H})$ be such that $XY=YX$ and $|X^2|Y^2=(Y^2)^*|X^2|$. Let $f$ and $g$ be two non-negative continuous functions on $[0,\infty)$ such that $f(t)g(t)=t,$ for all $ t\in [0,\infty).$ Then
\begin{eqnarray*}
w^{2p}(XY) &\leq & \frac{\|XY\|^{2p}}{2}+r^p(Y^2) ~~ w\left(\begin{array}{cc}
O& \frac{1}{\alpha}f^{p\alpha}(|X^2|)\\
\frac{1}{\beta}g^{p\beta}(|(X^*)^2|) & O
\end{array}\right)\\
&\leq& \frac{\|XY\|^{2p}}{2}+\frac{1}{2}r^p(Y^2) \Bigg[\Big\|  \frac{1}{\alpha^2}|f^{p\alpha}(|X^2|)|^2 +\frac{1}{\beta^2}|g^{p\beta}(|(X^*)^2|)|^2 \Big\| \\ 
&& \hspace{4.7cm}+\frac{2}{\alpha \beta}\left\| g^{p\beta}(|(X^*)^2|) f^{p\alpha}(|X^2|)\right\| \Bigg]^{\frac{1}{2}},
\end{eqnarray*}
where $p\geq 1$ and $\alpha,\beta >1$ be such that $\frac{1}{\alpha}+\frac{1}{\beta}=1$, $p\alpha\geq 2$, $p\beta \geq 2.$
\end{theorem}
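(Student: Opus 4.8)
The plan is to open with Buzano's inequality (Lemma~\ref{lemma-5}) in order to reduce the estimate for $w^{2p}(XY)$ to one involving $w^p(X^2Y^2)$, and then to invoke Theorem~\ref{theorem-1} applied to the pair $X^2,Y^2$, for which the second hypothesis was tailor-made.

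First I would fix a unit vector $x\in\mathbb{H}$ and apply Lemma~\ref{lemma-5} with $a=XYx$, $b=(XY)^{*}x$ and the vector $x$. Since $\langle x,(XY)^{*}x\rangle=\langle XYx,x\rangle$, the left-hand side of Buzano's inequality becomes $|\langle XYx,x\rangle|^2$, while the cross term is $\langle a,b\rangle=\langle (XY)^2x,x\rangle$. This is the point at which the commutativity hypothesis enters: $XY=YX$ gives $(XY)^2=X^2Y^2$. Bounding $\|XYx\|\,\|(XY)^{*}x\|\le\|XY\|^2$, I obtain
\[|\langle XYx,x\rangle|^2\le\tfrac12\|XY\|^2+\tfrac12\,|\langle X^2Y^2x,x\rangle|.\]
Raising both sides to the power $p\ge1$ and using the convexity of $t\mapsto t^p$ in the form $\left(\tfrac{s+t}{2}\right)^p\le\tfrac{s^p+t^p}{2}$ yields
\[|\langle XYx,x\rangle|^{2p}\le\frac{\|XY\|^{2p}}{2}+\frac12\,|\langle X^2Y^2x,x\rangle|^p.\]
Taking the supremum over all unit vectors $x$ (the first term being independent of $x$) then gives $w^{2p}(XY)\le\tfrac12\|XY\|^{2p}+\tfrac12\,w^p(X^2Y^2)$.

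To finish, I would note that the hypothesis $|X^2|Y^2=(Y^2)^{*}|X^2|$ is exactly the condition required to apply Theorem~\ref{theorem-1} with $X^2$ in place of $X$ and $Y^2$ in place of $Y$; here one uses $|(X^2)^{*}|=|(X^{*})^2|$ and that $r(Y)$ is replaced by $r(Y^2)$. Substituting the two successive bounds of Theorem~\ref{theorem-1} for $w^p(X^2Y^2)$ into the estimate above produces the two displayed inequalities: for the first, the factor $2$ appearing in Theorem~\ref{theorem-1} cancels the $\tfrac12$ to leave the coefficient $r^p(Y^2)$ in front of the off-diagonal operator-matrix numerical radius, and for the second the residual $\tfrac12$ carries through to give $\tfrac12 r^p(Y^2)[\cdots]^{1/2}$ (Lemma~\ref{lemma-4} being already folded into Theorem~\ref{theorem-1}).

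The one place demanding care is the Buzano step: the vectors $a$ and $b$ must be chosen so that the off-diagonal term collapses precisely to $\langle X^2Y^2x,x\rangle$, and this is exactly where $XY=YX$ is indispensable. Once that identification is made, the remainder is a direct citation of Theorem~\ref{theorem-1} combined with the elementary convexity inequality, so I do not expect any genuinely new operator estimate to be needed beyond these two ingredients.
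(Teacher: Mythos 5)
Your proposal is correct and follows essentially the same route as the paper: Buzano's inequality with $a=XYx$, $b=(XY)^{*}x=Y^{*}X^{*}x$, the identity $(XY)^2=X^2Y^2$ from commutativity, convexity of $t^p$, and then the Theorem \ref{theorem-1} bound applied to the pair $(X^2,Y^2)$. The only (harmless) cosmetic difference is that you take the supremum first and cite Theorem \ref{theorem-1} as a black box, whereas the paper re-runs its proof pointwise on $|\langle X^2Y^2x,x\rangle|^p$ before taking the supremum.
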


\begin{proof}
Let $x\in \mathbb{H}$ be such that $\|x\|=1$. Taking $a=XYx$ and $b=Y^*X^*x$ in Lemma \ref{lemma-5}, we get
\begin{eqnarray*}
|\langle XYx,x\rangle|^2&\leq&\frac{1}{2}\left(\|XYx\|\|Y^*X^*x\|+|\langle (XY)^2x,x\rangle|\right)\\
&\leq& \frac{\|XY\|^2}{2}+\frac{|\langle X^2Y^2x,x\rangle|}{2}, ~~\textit{as}~~XY=YX.
\end{eqnarray*}
Using convexity of $t^p$, we get
\[|\langle XYx,x\rangle|^{2p}\leq \frac{\|XY\|^{2p}}{2}+\frac{|\langle X^2Y^2x,x\rangle|^p}{2}.\]
Proceeding similarly as in the proof of Theorem \ref{theorem-1} and noting that $|X^2|Y^2=(Y^*)^2|X^2|$, we get
\begin{eqnarray*}
|\langle X^2Y^2x,x\rangle|^{p} &\leq & 2r^p(Y^2) ~~ w\left(\begin{array}{cc}
O& \frac{1}{\alpha}f^{p\alpha}(|X^2|)\\
\frac{1}{\beta}g^{p\beta}(|(X^*)^2|) & O
\end{array}\right)\\
&\leq& r^p(Y^2) \Bigg[\Big\|  \frac{1}{\alpha^2}|f^{p\alpha}(|X^2|)|^2 +\frac{1}{\beta^2}|g^{p\beta}(|(X^*)^2|)|^2 \Big\| \\ 
&& \hspace{4cm}+\frac{2}{\alpha \beta}\left\| g^{p\beta}(|(X^*)^2|) f^{p\alpha}(|X^2|)\right\| \Bigg]^{\frac{1}{2}}.
\end{eqnarray*}
Therefore,
\begin{eqnarray*}
|\langle XYx,x\rangle|^{2p} &\leq & \frac{\|XY\|^{2p}}{2}+r^p(Y^2) ~~ w\left(\begin{array}{cc}
O& \frac{1}{\alpha}f^{p\alpha}(|X^2|)\\
\frac{1}{\beta}g^{p\beta}(|(X^*)^2|) & O
\end{array}\right)\\
&\leq& \frac{\|XY\|^{2p}}{2}+\frac{1}{2}r^p(Y^2) \Bigg[\Big\|  \frac{1}{\alpha^2}|f^{p\alpha}(|X^2|)|^2 +\frac{1}{\beta^2}|g^{p\beta}(|(X^*)^2|)|^2 \Big\| \\ 
&& \hspace{4.3cm}+\frac{2}{\alpha \beta}\left\| g^{p\beta}(|(X^*)^2|) f^{p\alpha}(|X^2|)\right\| \Bigg]^{\frac{1}{2}}.
\end{eqnarray*}
Taking supremum over $x\in \mathbb{H}, \|x\|=1$, we get the required inequality and this completes the proof of the theorem.
\end{proof}

In particular, if we take  $\alpha=\beta=2$ and $p=1$ in Theorem \ref{theorem-2}, then we get the following inequality:
\begin{corollary}\label{corollary-3}
Let $X,Y \in B(\mathbb{H})$ be such that $XY=YX$ and $|X^2|Y^2=(Y^*)^2|X^2|.$ Then 
\[w^2(XY)\leq \frac{\|XY\|^2}{2}+\frac{1}{2} r(Y^2) ~~ w\left(\begin{array}{cc}
O& f^2(|X^2|)\\
g^2(|(X^*)^2|)&O
\end{array}\right),\] where $f$ and $g$ are two non-negative continuous functions on $[0,\infty)$ such that $f(t)g(t)=t,$ for all  $ t\in [0,\infty).$ 
\end{corollary}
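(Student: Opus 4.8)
The plan is to specialize Theorem~\ref{theorem-2} to the single case $p=1$ and $\alpha=\beta=2$; no new analytic input is needed, since all the work has already been done in that theorem. First I would verify that these choices are admissible: the constraint $\frac{1}{\alpha}+\frac{1}{\beta}=1$ holds because $\frac12+\frac12=1$, and the conditions $p\alpha\geq 2$ and $p\beta\geq 2$ hold with equality since $p\alpha=p\beta=2$. The hypotheses $XY=YX$ and $|X^2|Y^2=(Y^*)^2|X^2|$ are carried over verbatim from the theorem and require no separate verification.

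Next I would substitute these values into the first displayed bound of Theorem~\ref{theorem-2}. The exponent $2p$ collapses to $2$, so $w^{2p}(XY)$ becomes $w^2(XY)$, the term $\frac{\|XY\|^{2p}}{2}$ becomes $\frac{\|XY\|^2}{2}$, and $r^p(Y^2)$ becomes $r(Y^2)$. Since $\frac{1}{\alpha}=\frac{1}{\beta}=\frac12$ and $p\alpha=p\beta=2$, the off-diagonal operator matrix reduces to
\[
\left(\begin{array}{cc}
O & \tfrac12 f^2(|X^2|)\\
\tfrac12 g^2(|(X^*)^2|) & O
\end{array}\right).
\]
The one step that warrants a moment's care is to invoke the positive homogeneity of the numerical radius, $w(\lambda T)=|\lambda|\,w(T)$, in order to factor the common scalar $\tfrac12$ out of this block. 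Writing the matrix above as $\tfrac12$ times the matrix with entries $f^2(|X^2|)$ and $g^2(|(X^*)^2|)$ produces the coefficient $\frac12 r(Y^2)$ in front of the numerical radius term.

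Combining these observations yields directly
\[
w^2(XY)\leq \frac{\|XY\|^2}{2}+\frac12\, r(Y^2)\,w\!\left(\begin{array}{cc}
O & f^2(|X^2|)\\
g^2(|(X^*)^2|) & O
\end{array}\right),
\]
which is the claimed inequality. There is essentially no obstacle here; the only place one could stumble is in the homogeneity step, where it is important to recognize that the single scalar $\tfrac12$ factors out of the whole block at once (so the coefficient is $\tfrac12$, not $\tfrac14$), because the off-diagonal entries share the identical prefactor $\frac{1}{\alpha}=\frac{1}{\beta}=\tfrac12$. Everything else is immediate from Theorem~\ref{theorem-2}.
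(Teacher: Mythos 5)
Your proof is correct and is essentially identical to the paper's: the paper obtains Corollary~\ref{corollary-3} precisely by setting $\alpha=\beta=2$ and $p=1$ in Theorem~\ref{theorem-2}. Your extra care with the homogeneity step is sound --- since both off-diagonal entries carry the same prefactor $\tfrac12$, the whole block matrix is $\tfrac12$ times the unscaled one, so the coefficient $\tfrac12 r(Y^2)$ comes out exactly as claimed.
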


Considering  $f(t)=g(t)=t^{\frac{1}{2}}$ in Corollary \ref{corollary-3}  and noting that  $f^2(|X^2|)=|X^2|$, $g^2(|(X^*)^2|)=|(X^*)^2|$ and $r(Y^2)\leq w(Y^2)$ we get the following inequality by using Lemma \ref{lemma-4}.

\begin{corollary}\label{corollary-4}
Let $X,Y \in B(\mathbb{H})$ be such that $XY=YX$ and $|X^2|Y^2=(Y^*)^2|X^2|.$ Then 
\begin{eqnarray*}
w^2(XY)&\leq& \frac{\|XY\|^2}{2}\\
&+&\frac{1}{8}\Big( \left\| |X^2|^2+|(X^*)^2|^2\right\|+2\left\|X^4\right\|\Big)^{\frac{1}{2}} \Big( \left\| |Y^2|^2+|(Y^*)^2|^2\right\|+2\left\|Y^4\right\|\Big)^{\frac{1}{2}}.
\end{eqnarray*}
\end{corollary}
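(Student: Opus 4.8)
The plan is to specialize Corollary~\ref{corollary-3} to the functions $f(t)=g(t)=t^{1/2}$ and then bound the two numerical radii that appear by means of Lemma~\ref{lemma-4}. Since the hypotheses $XY=YX$ and $|X^2|Y^2=(Y^*)^2|X^2|$ are exactly those of Corollary~\ref{corollary-3}, I may start from
\[ w^2(XY)\leq \frac{\|XY\|^2}{2}+\frac12\, r(Y^2)\, w\left(\begin{array}{cc} O & f^2(|X^2|) \\ g^2(|(X^*)^2|) & O\end{array}\right). \]
With $f(t)=g(t)=t^{1/2}$ the continuous functional calculus gives $f^2(|X^2|)=|X^2|$ and $g^2(|(X^*)^2|)=|(X^*)^2|$, and the estimate $r(Y^2)\leq w(Y^2)$ then reduces the problem to bounding the scalar $w(Y^2)$ and the off-diagonal operator matrix $N=\left(\begin{smallmatrix} O & |X^2| \\ |(X^*)^2| & O \end{smallmatrix}\right)$ separately.

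For the matrix factor I would apply the general form of Lemma~\ref{lemma-4} to $N$, taking its ``$X$'' to be $|X^2|$ and its ``$Y$'' to be $|(X^*)^2|$; since both entries are positive this yields
\[ w^2(N)\leq \frac14\big\||X^2|^2+|(X^*)^2|^2\big\|+\frac12\, w\big(|(X^*)^2|\,|X^2|\big). \]
For the scalar factor I would use the ``in particular'' part of Lemma~\ref{lemma-4} with $Y^2$ in the role of $X$, which (using $(Y^2)^*=(Y^*)^2$ and $(Y^2)^2=Y^4$) gives
\[ w^2(Y^2)\leq \frac14\big\||Y^2|^2+|(Y^*)^2|^2\big\|+\frac12\, w(Y^4). \]

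It then remains to replace the two ``cross'' numerical radii by operator norms. The term $w(Y^4)\leq\|Y^4\|$ is immediate. The genuinely delicate point, which I expect to be the main obstacle, is the bound $w\big(|(X^*)^2|\,|X^2|\big)\leq\|X^4\|$. I would obtain it from $w(\cdot)\leq\|\cdot\|$ together with the operator identity $\||T^*|\,|T|\|=\|T^2\|$ applied to $T=X^2$: writing the polar decomposition $T=U|T|$, so that $|T^*|=U|T|U^*$, and using that $U$ is a partial isometry isometric on $\overline{R(|T|)}$ together with $\|A\|=\|A^*\|$, one gets $\||T^*|\,|T|\|=\||T|U^*|T|\|=\||T|U|T|\|=\||T|T\|$, and then $\||T|T\|^2=\|T^*|T|^2T\|=\|(T^2)^*T^2\|=\|T^2\|^2$. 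With $T=X^2$ this reads $\||(X^*)^2|\,|X^2|\|=\|X^4\|$, as needed.

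Substituting the resulting estimates $w(Y^2)\leq\tfrac12\big(\||Y^2|^2+|(Y^*)^2|^2\|+2\|Y^4\|\big)^{1/2}$ and $w(N)\leq\tfrac12\big(\||X^2|^2+|(X^*)^2|^2\|+2\|X^4\|\big)^{1/2}$ into the first display, and collecting the three factors of $\tfrac12$ (one carried over from Corollary~\ref{corollary-3} and one from each square-root estimate) into the constant $\tfrac18$, yields the claimed inequality. Apart from the norm identity above, the argument is just careful bookkeeping of the multiplicative constants and of which operator plays the role of $X$ and $Y$ in each invocation of Lemma~\ref{lemma-4}.
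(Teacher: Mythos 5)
Your proposal is correct and follows essentially the same route as the paper, which derives the corollary in one sentence by specializing Corollary~\ref{corollary-3} to $f(t)=g(t)=t^{1/2}$, using $r(Y^2)\leq w(Y^2)$, and invoking Lemma~\ref{lemma-4} twice (once for $w(Y^2)$ and once for the off-diagonal operator matrix). Your explicit polar-decomposition proof of the identity $\big\||T^*|\,|T|\big\|=\|T^2\|$ (with $T=X^2$) supplies a step the paper leaves implicit, and your bookkeeping of the factors of $\tfrac12$ into $\tfrac18$ is exactly right.
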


\begin{remark}
Alomari \cite[Cor. 3.3]{A} obtained an inequality, which seems to have some typos. This inequality should be of the form
\[w^2(XY)\leq \frac{1}{2}\|XY\|^2+\frac{1}{8}\left(\|Y^2\|+\|Y^4\|^{\frac{1}{2}}\right) \left(\|X^2\|+\|X^4\|^{\frac{1}{2}}\right),\] where $XY=YX$ and $|X^2|Y^2=(Y^*)^2|X^2|.$
For any $T\in B(\mathbb{H})$, we observe that $\left\| |T|^2+|T^*|^2\right\|+2\left\|T^2\right\|\leq (\|T\|+\|T^2\|^{\frac{1}{2}})^2$ (see Remark \ref{remark-p}) and so the inequality obtained in Corollary \ref{corollary-4} is better than that obtained in \cite[Cor. 3.3]{A}.  
\end{remark}

\section{\textbf{Bounds for the numerical radius of operator matrices}}
\noindent In this section, we obtain some inequalities for the numerical radius of $n\times n$ operator matrices. First we prove the following theorem.

\begin{theorem}\label{th-gh}
Let $T=(T_{ij})$ be an $n\times n$ operator matrix, where $T_{ij}\in B(\mathbb{H}).$ Let $f$ and $g$ be two non-negative continuous functions on $[0,\infty)$ such that $f(t)g(t)=t,$ for all  $ t\in [0,\infty).$  Then
\[w(T)\leq w(T'),\]
where $T'=(t'_{ij})_{n\times n}$ and $t'_{ij}=\|f^2(|T_{ij}|)\|^{\frac{1}{2}} \|g^2(|T^*_{ij}|)\|^{\frac{1}{2}}$.
\end{theorem}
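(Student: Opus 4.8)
The plan is to pass from the operator inner product $\langle Tx,x\rangle$ on $\mathbb{H}^n$ to a scalar quadratic form on $\mathbb{C}^n$ and then recognise that form as being dominated by $w(T')$. Fix a unit vector $x=(x_1,\dots,x_n)\in\mathbb{H}^n$, so that $\sum_{i=1}^n\|x_i\|^2=1$. Expanding the block action of $T$ gives $\langle Tx,x\rangle=\sum_{i,j=1}^n\langle T_{ij}x_j,x_i\rangle$, whence by the triangle inequality $|\langle Tx,x\rangle|\leq\sum_{i,j=1}^n|\langle T_{ij}x_j,x_i\rangle|$.

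The key step is to estimate each term $|\langle T_{ij}x_j,x_i\rangle|$. For this I would invoke the mixed Schwarz inequality, which is exactly Lemma \ref{lemma-3} in the special case $Y=I$: since $|T_{ij}|\,I=I\,|T_{ij}|$ and $r(I)=1$, that lemma yields $|\langle T_{ij}x_j,x_i\rangle|\leq\|f(|T_{ij}|)x_j\|\,\|g(|T^*_{ij}|)x_i\|$. Bounding each factor by the operator norm times the norm of the vector, and using that $f(|T_{ij}|)$ and $g(|T^*_{ij}|)$ are positive operators so that $\|f(|T_{ij}|)\|=\|f^2(|T_{ij}|)\|^{1/2}$ and $\|g(|T^*_{ij}|)\|=\|g^2(|T^*_{ij}|)\|^{1/2}$, I obtain $|\langle T_{ij}x_j,x_i\rangle|\leq t'_{ij}\,\|x_i\|\,\|x_j\|$.

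Combining the two displays gives $|\langle Tx,x\rangle|\leq\sum_{i,j}t'_{ij}\|x_i\|\|x_j\|$. Now set $\tilde x=(\|x_1\|,\dots,\|x_n\|)\in\mathbb{C}^n$, a unit vector with non-negative real entries. Because the scalars $t'_{ij}$ are real and non-negative, the right-hand sum is precisely $\langle T'\tilde x,\tilde x\rangle$, which is non-negative and therefore at most $w(T')$. Thus $|\langle Tx,x\rangle|\leq w(T')$, and taking the supremum over all unit $x\in\mathbb{H}^n$ yields $w(T)\leq w(T')$.

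I expect the only genuinely delicate point to be the reduction step: recognising that $\sum_{i,j}t'_{ij}\|x_i\|\|x_j\|$ is the value of the scalar quadratic form of $T'$ at the non-negative unit vector $\tilde x$, and is hence controlled by $w(T')$ rather than by the (generally larger) operator norm of $T'$. Everything else — the block expansion, the unconditional mixed Schwarz estimate coming from the choice $Y=I$, and the identity $\|f(|S|)\|=\|f^2(|S|)\|^{1/2}$ valid because $f(|S|)$ is positive — is routine.
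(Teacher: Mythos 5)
Your proof is correct and is essentially identical to the paper's own argument: both expand $\langle Tx,x\rangle$ blockwise, apply the mixed Schwarz inequality (Lemma \ref{lemma-3} with $Y=I$) to each term, bound $\|f(|T_{ij}|)x_j\|\,\|g(|T^*_{ij}|)x_i\|$ by $t'_{ij}\|x_i\|\|x_j\|$, and recognise the resulting sum as $\langle T'\tilde{x},\tilde{x}\rangle\leq w(T')$ for the unit vector $\tilde{x}=(\|x_1\|,\dots,\|x_n\|)^t\in\mathbb{C}^n$. The only cosmetic difference is that the paper passes through $\langle f^2(|T_{ij}|)x_j,x_j\rangle^{1/2}$ while you use $\|f(|T_{ij}|)\|=\|f^2(|T_{ij}|)\|^{1/2}$; these are equivalent.
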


\begin{proof}
Let $x=(x_1,x_2,\ldots,x_n)^t\in \oplus^n_{i=1}\mathbb{H}$ with $\|x\|=1$ and  $\tilde{x}=(\|x_1\|,\|x_2\|,\ldots,\|x_n\|)^t$. Then clearly $\tilde{x}$ is a unit vector in $\mathbb{C}^n$.
Now using Lemma \ref{lemma-3}, we get
\begin{eqnarray*}
|\langle Tx,x\rangle|&=& |\sum_{i,j=1}^n\langle T_{ij}x_j,x_i\rangle|\\
&\leq& \sum_{i,j=1}^n|\langle T_{ij}x_j,x_i\rangle|\\
&\leq & \sum_{i,j=1}^n \|f(|T_{ij}|)x_j\| \|g(|T^*_{ij}|x_i)\|\\
&=& \sum_{i,j=1}^n\langle f^2(|T_{ij}|)x_j,x_j\rangle^{\frac{1}{2}} \langle g^2(|T^*_{ij}|)x_i,x_i\rangle^{\frac{1}{2}}\\
&\leq & \sum_{i,j=1}^n \|f^2(|T_{ij}|)\|^{\frac{1}{2}} \|g^2(|T^*_{ij}|)\|^{\frac{1}{2}}\|x_i\|\|x_j\|\\
&=& \langle T' \tilde{x},\tilde{x}\rangle\\
&\leq& w(T').
\end{eqnarray*}
Taking supremum over $x\in \oplus^n_{i=1}\mathbb{H},$ $ \|x\|=1$, we get the required inequality. This completes the proof.
\end{proof}

\begin{remark}
In particular, if we take $f(t)=g(t)=t^{\frac{1}{2}}$  in Theorem \ref{th-gh}, then we get, \[w(T)\leq w(\|T_{ij}\|).\] This inequality was also obtained by How and Du in \cite{HD}.
\end{remark}

Next we obtain the following inequality. 

\begin{theorem}\label{th-gk1}
Let $T=(T_{ij})$ be an $n\times n$ operator matrix, where $T_{ij}\in B(\mathbb{H}).$ Let $f$ and $g$ be two non-negative continuous functions on $[0,\infty)$ such that $f(t)g(t)=t,$ for all $ t\in [0,\infty)$. Let $T''=(t''_{ij})_{n\times n}$ where
$$ t''_{ij}= \begin{cases}
\frac{1}{2}\left(\|f^2(|T_{ij}|)+g^2(|T^*_{ij}|)\| \right), &  ~~ i=j \\
\|f^2(|T_{ij}|)\|^{\frac{1}{2}} \|g^2(|T^*_{ij}|)\|^{\frac{1}{2}}, &  ~~ i \neq j.
\end{cases}
$$ 
Then \,  $w(T)\leq w(T'').$

\end{theorem}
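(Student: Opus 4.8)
The plan is to follow the structure of the proof of Theorem \ref{th-gh} almost verbatim, with the single modification that the diagonal terms ($i=j$) are handled more sharply. The key observation is that the bound in Theorem \ref{th-gh} estimated $|\langle T_{ij}x_j,x_i\rangle|$ using Lemma \ref{lemma-3} (with $Y=I$, so $r(Y)=1$) and then applied the Cauchy--Schwarz-type step $\langle f^2(|T_{ij}|)x_j,x_j\rangle^{1/2}\langle g^2(|T^*_{ij}|)x_i,x_i\rangle^{1/2}\leq \|f^2(|T_{ij}|)\|^{1/2}\|g^2(|T^*_{ij}|)\|^{1/2}\|x_i\|\|x_j\|$. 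For the off-diagonal entries I would keep this estimate unchanged, which is exactly why $t''_{ij}=t'_{ij}$ when $i\neq j$. For the diagonal entries, where $i=j$ and the relevant inner products share the same vector $x_i$, I would instead bound $|\langle T_{ii}x_i,x_i\rangle|$ more directly.

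First I would fix $x=(x_1,\dots,x_n)^t\in\oplus_{i=1}^n\mathbb{H}$ with $\|x\|=1$ and set $\tilde{x}=(\|x_1\|,\dots,\|x_n\|)^t$, a unit vector in $\mathbb{C}^n$, just as before. Expanding $\langle Tx,x\rangle=\sum_{i,j}\langle T_{ij}x_j,x_i\rangle$ and applying the triangle inequality, I would split the sum into diagonal and off-diagonal parts. For the off-diagonal part I reuse the chain of inequalities from Theorem \ref{th-gh} to obtain $\sum_{i\neq j}\|f^2(|T_{ij}|)\|^{1/2}\|g^2(|T^*_{ij}|)\|^{1/2}\|x_i\|\|x_j\|$. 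For the diagonal part, the natural sharper estimate is $|\langle T_{ii}x_i,x_i\rangle|\leq \frac{1}{2}\big\||f^2(|T_{ii}|)+g^2(|T^*_{ii}|)\big\|\,\|x_i\|^2$: starting from Lemma \ref{lemma-3} one has $|\langle T_{ii}x_i,x_i\rangle|\leq \|f(|T_{ii}|)x_i\|\|g(|T^*_{ii}|)x_i\|=\langle f^2(|T_{ii}|)x_i,x_i\rangle^{1/2}\langle g^2(|T^*_{ii}|)x_i,x_i\rangle^{1/2}$, and then by the arithmetic--geometric mean inequality this is at most $\frac{1}{2}\langle(f^2(|T_{ii}|)+g^2(|T^*_{ii}|))x_i,x_i\rangle\leq \frac{1}{2}\|f^2(|T_{ii}|)+g^2(|T^*_{ii}|)\|\,\|x_i\|^2$. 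Summing the two parts gives exactly $\sum_{i,j}t''_{ij}\|x_i\|\|x_j\|=\langle T''\tilde{x},\tilde{x}\rangle\leq w(T'')$, and taking the supremum over unit $x$ completes the proof.

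The only step requiring a moment of care is the AM--GM move on the diagonal: one must pass from the geometric mean of the two scalar inner products to their arithmetic mean and then to the operator norm, and verify that $\frac{1}{2}\|f^2(|T_{ii}|)+g^2(|T^*_{ii}|)\|$ is genuinely the right diagonal entry. Because $f^2(|T_{ii}|)$ and $g^2(|T^*_{ii}|)$ are both positive operators, their sum is positive and the estimate $\langle (f^2(|T_{ii}|)+g^2(|T^*_{ii}|))x_i,x_i\rangle\leq \|f^2(|T_{ii}|)+g^2(|T^*_{ii}|)\|\|x_i\|^2$ is immediate. I do not anticipate a serious obstacle here; the content of the theorem is precisely that sharpening the diagonal via AM--GM (rather than the cruder Cauchy--Schwarz bound used off-diagonal) still produces a nonnegative matrix $T''$ whose numerical radius dominates $w(T)$, and the whole argument is a routine recombination of Lemma \ref{lemma-3} with the elementary inequality $\sqrt{st}\leq\frac{1}{2}(s+t)$.
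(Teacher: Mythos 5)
Your proposal is correct and coincides with the paper's own proof: the paper likewise expands $\langle Tx,x\rangle$, applies Lemma \ref{lemma-3} with the Cauchy--Schwarz bound kept for the off-diagonal terms, and sharpens the diagonal terms via exactly the AM--GM step $\langle f^2(|T_{ii}|)x_i,x_i\rangle^{1/2}\langle g^2(|T^*_{ii}|)x_i,x_i\rangle^{1/2}\leq \frac{1}{2}\|f^2(|T_{ii}|)+g^2(|T^*_{ii}|)\|\,\|x_i\|^2$ before summing to $\langle T''\tilde{x},\tilde{x}\rangle\leq w(T'')$. There is nothing to add or correct.
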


\begin{proof}
Let $x=(x_1,x_2,\ldots,x_n)^t\in \oplus^n_{i=1}\mathbb{H}$ with $\|x\|=1$ and  $\tilde{x}=(\|x_1\|,\|x_2\|,\ldots,\|x_n\|)^t$. Then clearly $\tilde{x}$ is a unit vector in $\mathbb{C}^n$.
Now using Lemma \ref{lemma-3}, we get
\begin{eqnarray*}
|\langle Tx,x\rangle|&=& |\sum_{i,j=1}^n\langle T_{ij}x_j,x_i\rangle|\\
&\leq& \sum_{i,j=1}^n|\langle T_{ij}x_j,x_i\rangle|\\
&\leq & \sum_{i,j=1}^n \|f(|T_{ij}|)x_j\| \|g(|T^*_{ij}|x_i)\|\\
&=& \sum_{i,j=1}^n\langle f^2(|T_{ij}|)x_j,x_j\rangle^{\frac{1}{2}} \langle g^2(|T^*_{ij}|)x_i,x_i\rangle^{\frac{1}{2}}\\
&\leq&\sum_{i=1}^n\frac{1}{2}\left(\|f^2(|T_{ii}|)+g^2(|T^*_{ii}|)\| \right)\|x_i\|^2 \\
&& \hspace{3cm}+ \sum_{i,j=1,i\neq j}^n\langle f^2(|T_{ij}|)x_j,x_j\rangle^{\frac{1}{2}} \langle g^2(|T^*_{ij}|)x_i,x_i\rangle^{\frac{1}{2}}\\
&\leq & \sum_{i=1}^n\frac{1}{2}\left(\|f^2(|T_{ii}|)+g^2(|T^*_{ii}|)\| \right)\|x_i\|^2 \\ 
&& \hspace{3cm}+\sum_{i,j=1, i\neq j}^n \|f^2(|T_{ij}|)\|^{\frac{1}{2}} \|g^2(|T^*_{ij}|)\|^{\frac{1}{2}}\|x_i\|\|x_j\|\\
&=& \langle T'' \tilde{x},\tilde{x}\rangle\\
&\leq& w(T'').
\end{eqnarray*}
Taking supremum over $x\in \oplus^n_{i=1}\mathbb{H},$ $ \|x\|=1$, we get the required inequality. This completes the proof.
\end{proof}

\begin{remark}
 \textbf{1.}  Alomari \cite[Th. 4.1]{A} also tried to improve on the inequality \cite[Th. 1]{AK}, obtained by Abu-Omar and Kittaneh. The statement of Theorem $4.1$ is not correct as it is not defined for odd $n$, e.g., if $n=5$ then we can't define $a_{33}$. Although it is defined for even $n$, the proof is incorrect and so is Corollary $4.1$. Considering $A_{11}=A_{22}=\left(\begin{array}{cc}
1&0\\
0&1
\end{array}\right)$, $A_{12}=\left(\begin{array}{cc}
0&1\\
0&0
\end{array}\right)$,  $A_{21}=\left(\begin{array}{cc}
0&0\\
1&0
\end{array}\right)$, we see that $w\left(\begin{array}{cc}
A_{11}&A_{12}\\
A_{21}&A_{22}
\end{array}\right)=2$,  whereas the upper bound obtained in \cite[Cor. 4.1]{A} is $\frac{3}{2}$. The main reason for that is author uses $|\langle A_{ij}x_j,x_i\rangle|\leq w(A_{ij})\|x_i\|\|x_j\|.$ But this is not always correct. As for example, if $A_{ij}=\left(\begin{array}{cc}
0&1\\
0&0
\end{array}\right)$, $x_j=(0,\frac{1}{\sqrt{2}})^t$ and $x_i=(\frac{1}{\sqrt{2}},0)^t$, then we see that $|\langle A_{ij}x_j,x_i\rangle|=\frac{1}{2}$ and $w(A_{ij})\|x_i\|\|x_j\|=\frac{1}{4}$.\\ 

\textbf{2.}  When this paper was uploaded on arXiv, Prof. Alomari communicated to us through e-mail and I quote a part of the communication ``Indeed, I noted on my ResearchGate profile that Theorem 4.1 is incorrect. In fact, I uploaded the wrong version of my accepted manuscript in the final step.'' However,  our mathematical comments are based on the published version of the paper that appeared in LAMA.

\end{remark}

\section{\textbf{Bounds for the B-numerical radius of operator matrices}}
\noindent In this section, we obtain bounds for the B-numerical radius of $n \times n$ operator matrices, where $B$ is an $n\times n$  diagonal operator matrix whose each diagonal entry ia a positive operator $A$ defined on $\mathbb{H}.$ To do so we need the following two lemmas, the proof of which can be found in \cite{BPN}.

\begin{lemma}$($\cite[Lemma 2.4]{BPN}$)$.\label{lem-4}
Let $T_{12},T_{21}\in B_A(\mathbb{H})$, where $A> 0$. 
If $B=\left(\begin{array}{cc}
A&O\\
O& A
\end{array}\right)$ then 
\begin{eqnarray*}
(i)&& w_B\left(\begin{array}{cc}
O&T_{12}\\
T_{21}& O
\end{array}\right)=w_B\left(\begin{array}{cc}
O&T_{21}\\
T_{12}& O
\end{array}\right),\\
(ii) && w_B\left(\begin{array}{cc}
	O&T_{12}\\
	e^{i\theta}T_{21}& O
\end{array}\right) =w_B\left(\begin{array}{cc}
	O&T_{12}\\
	T_{21}& O
\end{array}\right),~~\textit{for each}~~ \theta \in \mathbb{R}.
\end{eqnarray*}
\end{lemma}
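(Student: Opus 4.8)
The plan is to work directly from the definition of the $B$-numerical radius as a supremum over the unit $B$-sphere in $\mathbb{H}\oplus\mathbb{H}$ and to exploit two norm-preserving substitutions of the coordinate vector. Since $A>0$, the seminorm $\|\cdot\|_A$ is in fact a norm, so $\|x\|_B^2=\|x_1\|_A^2+\|x_2\|_A^2$ is a genuine norm for $x=(x_1,x_2)^t$; this removes all degeneracy and identifies the unit $B$-sphere with the pairs $(x_1,x_2)$ satisfying $\|x_1\|_A^2+\|x_2\|_A^2=1$. The first step is the expansion, valid for any off-diagonal operator matrix,
\[
\left\langle \left(\begin{array}{cc} O & S_{12}\\ S_{21} & O\end{array}\right)x,\,x\right\rangle_B=\langle S_{12}x_2,x_1\rangle_A+\langle S_{21}x_1,x_2\rangle_A,
\]
so that in every case $w_B$ is the supremum over the unit $B$-sphere of the modulus of a sum of this shape.

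For part (i), I would apply the coordinate swap $(x_1,x_2)\mapsto(x_2,x_1)$. It clearly preserves $\|x\|_B$ and is a bijection of the unit $B$-sphere, so the supremum is unchanged under it. Performing this swap in the expansion of $\left(\begin{array}{cc} O & T_{21}\\ T_{12} & O\end{array}\right)$ turns $\langle T_{21}x_2,x_1\rangle_A+\langle T_{12}x_1,x_2\rangle_A$ into $\langle T_{21}x_1,x_2\rangle_A+\langle T_{12}x_2,x_1\rangle_A$, which is precisely the integrand associated with $\left(\begin{array}{cc} O & T_{12}\\ T_{21} & O\end{array}\right)$; taking suprema gives the asserted equality. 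Conceptually this is conjugation by the flip $U=\left(\begin{array}{cc}O&I\\I&O\end{array}\right)$, which is $B$-self-adjoint and $B$-unitary and carries one matrix to the other.

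For part (ii), I would instead use the diagonal phase substitution $(x_1,x_2)\mapsto(x_1,e^{i\psi}x_2)$, which again preserves $\|x\|_B$ and is a bijection of the unit $B$-sphere. Writing $T_\theta=\left(\begin{array}{cc}O&T_{12}\\e^{i\theta}T_{21}&O\end{array}\right)$ and pulling the scalar through the $A$-inner product (linear in the first slot, conjugate-linear in the second), the integrand at $(x_1,e^{i\psi}x_2)$ becomes
\[
e^{i\psi}\langle T_{12}x_2,x_1\rangle_A+e^{i(\theta-\psi)}\langle T_{21}x_1,x_2\rangle_A=e^{i\psi}\Big(\langle T_{12}x_2,x_1\rangle_A+e^{i(\theta-2\psi)}\langle T_{21}x_1,x_2\rangle_A\Big).
\]
Taking the modulus removes the outer unimodular factor and leaves exactly the integrand for $T_{\theta-2\psi}$, so $w_B(T_\theta)=w_B(T_{\theta-2\psi})$ for every $\psi\in\mathbb{R}$; the choice $\psi=\theta/2$ yields $w_B(T_\theta)=w_B(T_0)$, which is the claim.

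The expansions and the scalar bookkeeping are routine. The one place that genuinely needs care — and what I expect to be the main obstacle — is justifying that each substitution is a bijection of the unit $B$-sphere, so that passing to the supremum is legitimate in both directions. This is exactly where strict positivity of $A$ enters (guaranteeing $\|\cdot\|_B$ is a norm, not merely a seminorm) and where one verifies that the swap and the diagonal phase are $B$-unitary; I anticipate this verification, rather than any nontrivial inequality, to be the crux.
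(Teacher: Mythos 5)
Your proposal is correct. One point of context: the paper does not prove this lemma at all --- it is quoted from \cite[Lemma 2.4]{BPN}, and the proof there (as in the rest of this literature, cf.\ Theorem \ref{th-5} in the present paper) runs through $B$-unitary invariance: one checks that $w_B(U^{\sharp_B}TU)=w_B(T)$ for a $B$-unitary $U$, and then conjugates by the flip $U=\left(\begin{array}{cc}O&I\\I&O\end{array}\right)$ for (i) and by a diagonal phase matrix $\left(\begin{array}{cc}I&O\\O&e^{i\psi}I\end{array}\right)$ for (ii). Your argument is the elementary, self-contained version of exactly this: instead of invoking an invariance lemma (which itself requires proof in the semi-Hilbertian setting, since $\langle\cdot,\cdot\rangle_A$ is only a semi-inner product and $B$-unitaries are not unitaries in the usual sense), you implement the conjugations by hand as norm-preserving bijective substitutions inside the defining supremum, and the scalar bookkeeping with the sesquilinearity of $\langle\cdot,\cdot\rangle_A$ is carried out correctly. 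What the citation-based route buys is reusability of the invariance principle (the paper uses it again in Theorem \ref{th-5}); what your route buys is that the lemma needs nothing beyond the definition. One small correction to your closing remark: strict positivity of $A$ is not actually where the weight falls. Your two substitutions preserve the seminorm $\|\cdot\|_B$ and are bijections of the set $\{x:\|x\|_B=1\}$ whether or not $\|\cdot\|_A$ is a genuine norm, so the same argument works verbatim for $A\geq 0$; the hypothesis $A>0$ is inherited from the quoted source rather than forced by your proof.
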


\begin{lemma}$($\cite[Lemma 2.4]{BPN}$)$.\label{lem-1}
Let $T_{11} \in B_A(\mathbb{H})$, where $A> 0$. If $B=\left(\begin{array}{cc}
A&O\\
O& A
\end{array}\right)$ then
\[w_B\left(\begin{array}{cc}
O&T_{11}\\
T_{11}&O\end{array}\right)  =w_A(T_{11}).\]
\end{lemma}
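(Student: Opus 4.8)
The plan is to prove the equality by establishing the two inequalities $w_B(S)\ge w_A(T_{11})$ and $w_B(S)\le w_A(T_{11})$ separately, where $S$ denotes the off-diagonal operator matrix on the left-hand side. The computational backbone of both directions is the explicit form of the $B$-inner product: for $x=(x_1,x_2)^t\in\mathbb{H}\oplus\mathbb{H}$ one has $\|x\|_B^2=\|x_1\|_A^2+\|x_2\|_A^2$, and since $Sx=(T_{11}x_2,T_{11}x_1)^t$,
\[\langle Sx,x\rangle_B=\langle T_{11}x_2,x_1\rangle_A+\langle T_{11}x_1,x_2\rangle_A.\]
Since $A>0$, the seminorm $\|\cdot\|_A$ is a genuine norm, so all the normalizations below are legitimate.

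For the lower bound I would show $W_A(T_{11})\subseteq W_B(S)$. Given a vector $y$ with $\|y\|_A=1$, put $x=\tfrac{1}{\sqrt{2}}(y,y)^t$; then $\|x\|_B=1$ and the displayed identity collapses to $\langle Sx,x\rangle_B=\langle T_{11}y,y\rangle_A$. Hence every element of $W_A(T_{11})$ belongs to $W_B(S)$, and passing to moduli and suprema yields $w_A(T_{11})\le w_B(S)$. This direction is routine.

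The upper bound is the main obstacle and carries the real content. Here I would bound $|\langle Sx,x\rangle_B|$ for an arbitrary unit vector $x=(x_1,x_2)^t$. The key step is a polarization identity for the sesquilinear form $\langle\cdot,\cdot\rangle_A$:
\[\langle T_{11}x_2,x_1\rangle_A+\langle T_{11}x_1,x_2\rangle_A=\tfrac{1}{2}\left(\langle T_{11}(x_1+x_2),x_1+x_2\rangle_A-\langle T_{11}(x_1-x_2),x_1-x_2\rangle_A\right),\]
which one checks by expanding the right-hand side. Each term on the right is a diagonal form, so the defining bound $|\langle T_{11}w,w\rangle_A|\le w_A(T_{11})\|w\|_A^2$ applies with $w=x_1\pm x_2$. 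Combining this with the triangle inequality and the parallelogram law $\|x_1+x_2\|_A^2+\|x_1-x_2\|_A^2=2(\|x_1\|_A^2+\|x_2\|_A^2)$ — valid because $\langle\cdot,\cdot\rangle_A$ is a semi-inner product — gives $|\langle Sx,x\rangle_B|\le w_A(T_{11})(\|x_1\|_A^2+\|x_2\|_A^2)=w_A(T_{11})$. Taking the supremum over unit $x$ completes this inequality.

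Putting the two bounds together yields $w_B(S)=w_A(T_{11})$. The only delicate points are verifying that the polarization and parallelogram identities transfer verbatim from the Hilbert-space inner product to the semi-inner product $\langle\cdot,\cdot\rangle_A$, and that the strict positivity of $A$ is what allows us to treat $\|\cdot\|_A$ as a norm throughout; the substantive estimate is the numerical-radius bound applied to the two diagonal forms produced by polarization, which is precisely why the upper bound, rather than the lower one, is the heart of the argument.
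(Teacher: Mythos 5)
Your proof is correct, and it is worth noting that the paper itself gives no proof of this lemma at all: it is quoted verbatim from \cite[Lemma 2.4]{BPN}, with the reader sent to that reference. So there is nothing in the present paper to compare against line by line; the natural comparison is with the standard argument in the semi-Hilbertian literature (and the technique this paper itself uses in the proof of Theorem \ref{th-5}), namely $B$-unitary invariance: conjugating $S=\left(\begin{array}{cc} O&T_{11}\\ T_{11}&O \end{array}\right)$ by the $B$-unitary $U=\frac{1}{\sqrt{2}}\left(\begin{array}{cc} I&-I\\ I&I \end{array}\right)$ gives $\left(\begin{array}{cc} T_{11}&O\\ O&-T_{11} \end{array}\right)$, and then one invokes $w_B(U^{\sharp_B}SU)=w_B(S)$ together with the fact that the $B$-numerical radius of a block-diagonal matrix is the maximum of the $w_A$'s of its blocks. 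Your route is genuinely different and more elementary: the lower bound via the vectors $\frac{1}{\sqrt{2}}(y,y)^t$, and the upper bound via the polarization identity $\langle T_{11}x_2,x_1\rangle_A+\langle T_{11}x_1,x_2\rangle_A=\frac{1}{2}\big(\langle T_{11}(x_1+x_2),x_1+x_2\rangle_A-\langle T_{11}(x_1-x_2),x_1-x_2\rangle_A\big)$ combined with the parallelogram law for the semi-inner product, both of which are checked by direct expansion. All steps are sound: the identity holds for any sesquilinear form, the estimate $|\langle T_{11}w,w\rangle_A|\le w_A(T_{11})\|w\|_A^2$ is legitimate since $A>0$ makes $\|\cdot\|_A$ a norm (in fact, even for $A\ge 0$ the degenerate case $\|w\|_A=0$ forces $Aw=0$ and hence $\langle T_{11}w,w\rangle_A=0$, so your argument is robust). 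What your approach buys is self-containedness: you need neither the invariance of $w_B$ under $B$-unitary conjugation nor the block-diagonal formula, both of which require separate proofs in the semi-Hilbertian setting; what the unitary-conjugation approach buys is reusability, since the same conjugation trick immediately yields related results such as the upper bound in Theorem \ref{th-5}.
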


Based on the above two lemmas, we first obtain the following upper and lower bounds for the B-numerical radius of $2\times 2$ operator matrices.
\begin{theorem}\label{th-5}
Let $T_{12},T_{21}\in B_A(\mathbb{H})$,  where $A> 0$. 
If $T=\left(\begin{array}{cc}
O&T_{12}\\
T_{21}& O
\end{array}\right)$ and $B=\left(\begin{array}{cc}
A&O\\
O& A
\end{array}\right)$ then
\begin{eqnarray*}
\frac{1}{2}\max\{w_A(T_{12}+T_{21}),w_A(T_{12}-T_{21})\} &\leq& w_B(T) \\
&\leq& \frac{1}{2}\{w_A(T_{12}+T_{21})+w_A(T_{12}-T_{21})\}.
\end{eqnarray*}
\end{theorem}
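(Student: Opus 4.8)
The plan is to reduce everything to the two structural lemmas (Lemma \ref{lem-4} and Lemma \ref{lem-1}) together with the fact that $w_B$ is a semi-norm, hence subadditive. Since $T_{12},T_{21}\in B_A(\mathbb{H})$ and $B_A(\mathbb{H})$ is a subalgebra, we have $T_{12}\pm T_{21}\in B_A(\mathbb{H})$, so all the $w_A$ and $w_B$ quantities below are finite and Lemma \ref{lem-1} applies to each of them. The subadditivity of $w_B$ itself is immediate from the triangle inequality applied inside the supremum defining $w_B$.

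For the upper bound I would first split $T$ into its symmetric and antisymmetric off-diagonal parts,
\[T=\frac12\begin{pmatrix}O & T_{12}+T_{21}\\ T_{12}+T_{21} & O\end{pmatrix}+\frac12\begin{pmatrix}O & T_{12}-T_{21}\\ -(T_{12}-T_{21}) & O\end{pmatrix},\]
and apply subadditivity to bound $w_B(T)$ by half the sum of the two $B$-numerical radii. The first matrix has equal off-diagonal entries, so Lemma \ref{lem-1} identifies its $B$-numerical radius with $w_A(T_{12}+T_{21})$. For the second matrix I would first absorb the minus sign using Lemma \ref{lem-4}(ii) with $e^{i\theta}=-1$, reducing it to the matrix with equal entries $T_{12}-T_{21}$, and then apply Lemma \ref{lem-1} to obtain $w_A(T_{12}-T_{21})$. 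This yields the claimed upper bound.

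For the lower bound it suffices to prove the two inequalities $2w_B(T)\ge w_A(T_{12}+T_{21})$ and $2w_B(T)\ge w_A(T_{12}-T_{21})$ separately; the idea is to reverse the above, starting from the diagonalized matrix and writing it as a sum of two copies of $T$ modulo the symmetries in Lemma \ref{lem-4}. Concretely, by Lemma \ref{lem-1},
\[w_A(T_{12}+T_{21})=w_B\begin{pmatrix}O & T_{12}+T_{21}\\ T_{12}+T_{21}& O\end{pmatrix}=w_B\!\left(T+\begin{pmatrix}O & T_{21}\\ T_{12}& O\end{pmatrix}\right),\]
and since Lemma \ref{lem-4}(i) shows the second summand has the same $B$-numerical radius as $T$, subadditivity gives $w_A(T_{12}+T_{21})\le 2w_B(T)$. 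For the difference, I would write $\begin{pmatrix}O & T_{12}-T_{21}\\ T_{12}-T_{21}& O\end{pmatrix}$ as the sum of $\begin{pmatrix}O & T_{12}\\ -T_{21}& O\end{pmatrix}$ and $\begin{pmatrix}O & -T_{21}\\ T_{12}& O\end{pmatrix}$, each of which has $B$-numerical radius equal to $w_B(T)$ by combining parts (ii) (to absorb the sign, $e^{i\theta}=-1$) and (i) (to swap the entries) of Lemma \ref{lem-4}. Subadditivity again gives $w_A(T_{12}-T_{21})\le 2w_B(T)$, and taking the maximum completes the lower bound.

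The steps are individually routine, so the work is essentially bookkeeping. The only place that needs genuine care -- and which I regard as the main obstacle -- is matching the signs and the swap of off-diagonal entries so that the two summands in the lower-bound decomposition for the difference really reduce to $T$ itself: using the phase $e^{i\theta}=-1$ and applying parts (i) and (ii) of Lemma \ref{lem-4} in the correct order is exactly what makes this term come out as $w_A(T_{12}-T_{21})$ rather than $w_A(T_{12}+T_{21})$.
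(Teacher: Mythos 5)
Your proof is correct, and its lower-bound half is essentially the paper's own argument: expand $w_A(T_{12}+T_{21})=w_B\bigl(T+\left(\begin{smallmatrix}O&T_{21}\\T_{12}&O\end{smallmatrix}\right)\bigr)$ via Lemma \ref{lem-1}, use subadditivity, and invoke Lemma \ref{lem-4}(i); the paper handles the difference case by simply substituting $-T_{21}$ for $T_{21}$ in the already-proved inequality and citing Lemma \ref{lem-4}(ii), whereas you redo the decomposition explicitly with the signs and swap tracked by parts (ii) and (i) --- same content, slightly longer bookkeeping. Where you genuinely diverge is the upper bound. The paper conjugates $T$ by the $B$-unitary $U=\frac{1}{\sqrt{2}}\left(\begin{smallmatrix}I&-I\\I&I\end{smallmatrix}\right)$, uses the invariance $w_B(T)=w_B(U^{\sharp_B}TU)$, and then splits the conjugated matrix into its diagonal part $\mathrm{diag}\,(T_{12}+T_{21},\,-T_{12}-T_{21})$ and its off-diagonal part; this needs two facts beyond the stated lemmas, namely $B$-unitary invariance of $w_B$ and the identification $w_B(\mathrm{diag}\,(S,-S))=w_A(S)$ for diagonal operator matrices. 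Your direct splitting
\[T=\frac12\begin{pmatrix}O & T_{12}+T_{21}\\ T_{12}+T_{21} & O\end{pmatrix}+\frac12\begin{pmatrix}O & T_{12}-T_{21}\\ -(T_{12}-T_{21}) & O\end{pmatrix}\]
reaches the same bound using only subadditivity, Lemma \ref{lem-1}, and Lemma \ref{lem-4}(ii), so it is self-contained relative to the two quoted lemmas and in that sense more elementary; the two routes are really conjugate to each other (applying $U^{\sharp_B}(\cdot)U$ to your two summands produces exactly the paper's diagonal and off-diagonal pieces), but the paper's version showcases the unitary-conjugation technique, which is reused elsewhere, at the cost of importing those extra facts.
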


\begin{proof}
First we obtain the left hand inequality. From Lemma \ref{lem-1} we have
\begin{eqnarray*}
w_A(T_{12}+T_{21}) &=& w_B\left(\begin{array}{cc}
O&T_{12}+T_{21}\\
T_{12}+T_{21}& O
\end{array}\right)\\
\Rightarrow w_A(T_{12}+T_{21})&\leq& w_B\left(\begin{array}{cc}
O&T_{12}\\
T_{21}& O
\end{array}\right)+w_B\left(\begin{array}{cc}
O&T_{21}\\
T_{12}& O
\end{array}\right)\\
\Rightarrow w_A(T_{12}+T_{21})&\leq&2 w_B\left(\begin{array}{cc}
O&T_{12}\\
T_{21}& O
\end{array}\right), ~~\mbox{using Lemma}~~ \ref{lem-4} (i)\\
\Rightarrow \frac{1}{2}w_A(T_{12}+T_{21}) &\leq & w_B\left(T \right).
\end{eqnarray*}
Replacing $T_{21}$ by $-T_{21}$ in the above inequality and using  Lemma \ref{lem-4} (ii), we get
\[\frac{1}{2}w_A(T_{12}-T_{21})\leq w_B\left(T \right).\]
Therefore, \[\frac{1}{2}\max\{w_A(T_{12}+T_{21}),w_A(T_{12}-T_{21})\} \leq w_B(T).\] This completes the proof of the left hand inequality. Next we consider a B-unitary operator, $U=\frac{1}{\sqrt{2}}\left(\begin{array}{cc}
I&-I\\
I& I
\end{array}\right)$. Then from $w_B(T)=w_B(U^{\sharp_B}TU)$, we get
\begin{eqnarray*}
w_B\left(T \right)&=& \frac{1}{2} w_B\left(\begin{array}{cc}
T_{12}+T_{21}&T_{12}-T_{21}\\
-T_{12}+T_{21}& -T_{12}-T_{21}
\end{array}\right)\\
&\leq & \frac{1}{2} w_B\left(\begin{array}{cc}
T_{12}+T_{21}&O\\
O& -T_{12}-T_{21}
\end{array}\right)\\ && +\frac{1}{2} w_B\left(\begin{array}{cc}
O&T_{12}-T_{21}\\
-T_{12}+T_{21}& O
\end{array}\right)\\
&=& \frac{1}{2}w_A(T_{12}+T_{21})+\frac{1}{2}w_A(T_{12}-T_{21}).
\end{eqnarray*}
Thus we obtain the right hand inequality and this completes the proof.
\end{proof}
Next we obtain another bound for the B-numerical radius of $2\times 2$ operator matrices. To prove this we need the following lemma.
\begin{lemma}\label{lem-2}
Let $T_{12},T_{21}\in B_A(\mathbb{H})$, where $A> 0$.
If $T=\left(\begin{array}{cc}
O&T_{12}\\
T_{21}& O
\end{array}\right)$ and $B=\left(\begin{array}{cc}
A&O\\
O& A
\end{array}\right)$ then
\[w_B(T)=\frac{1}{2}\sup_{\theta\in \mathbb{R}}\left\|e^{i\theta} T_{12}+e^{-i\theta} T^{\sharp_A}_{21}\right\|_A.\]
\end{lemma}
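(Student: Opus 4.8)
The plan is to reduce everything to the semi-Hilbertian analogue of the classical identity $w(\cdot)=\sup_{\theta}\|\mathrm{Re}(e^{i\theta}\,\cdot\,)\|$. First I would record the $B$-version
\[w_B(S)=\sup_{\theta\in\mathbb{R}}\big\|\mathrm{Re}_B(e^{i\theta}S)\big\|_B,\qquad \mathrm{Re}_B(S)=\tfrac12\big(S+S^{\sharp_B}\big),\]
valid for every $S\in B_B(\mathbb{H}\oplus\mathbb{H})$. This is precisely Zamani's identity \cite{Z}, applied with the diagonal weight $B$ in place of $A$; equivalently it follows from $|\langle Sx,x\rangle_B|=\sup_{\theta}\langle \mathrm{Re}_B(e^{i\theta}S)x,x\rangle_B$ together with the fact that a $B$-self-adjoint operator has $B$-numerical radius equal to its $B$-operator seminorm. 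Applying it to $S=T$ reduces the problem to evaluating $\|\mathrm{Re}_B(e^{i\theta}T)\|_B$ for each fixed $\theta$.

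Next I would compute the $B$-adjoint of $T$ block-wise. Since $B=\mathrm{diag}(A,A)$, a direct check starting from $\langle Sx,y\rangle_B=\langle x,S^{\sharp_B}y\rangle_B$ shows that the $(i,j)$ entry of $S^{\sharp_B}$ is $(S_{ji})^{\sharp_A}$; in particular $T^{\sharp_B}=\left(\begin{smallmatrix} O & T_{21}^{\sharp_A}\\ T_{12}^{\sharp_A} & O\end{smallmatrix}\right)$. Consequently, since $(e^{i\theta}T)^{\sharp_B}=e^{-i\theta}T^{\sharp_B}$, I obtain $\mathrm{Re}_B(e^{i\theta}T)=\tfrac12 S_\theta$, where $S_\theta=\left(\begin{smallmatrix} O & M\\ N & O\end{smallmatrix}\right)$ with $M=e^{i\theta}T_{12}+e^{-i\theta}T_{21}^{\sharp_A}$ and $N=e^{i\theta}T_{21}+e^{-i\theta}T_{12}^{\sharp_A}$.

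It then remains to show $\|S_\theta\|_B=\|M\|_A$. Writing $x=(x_1,x_2)$ with $\|x\|_B^2=\|x_1\|_A^2+\|x_2\|_A^2$, one has $\|S_\theta x\|_B^2=\|Mx_2\|_A^2+\|Nx_1\|_A^2$; the upper bound $\|S_\theta\|_B\le\|M\|_A$ then follows once I verify $\|N\|_A=\|M\|_A$, while the matching lower bound comes from testing on vectors $(0,x_2)$ with $x_2$ nearly attaining $\|M\|_A$. To see $\|N\|_A=\|M\|_A$ I would use $\|N\|_A=\|N^{\sharp_A}\|_A$ together with $(T_{12}^{\sharp_A})^{\sharp_A}=PT_{12}P$, where $P$ is the orthogonal projection onto $\overline{R(A)}$: then $N^{\sharp_A}-M=e^{i\theta}\big(PT_{12}P-T_{12}\big)$, and for $y\in\overline{R(A)}$ the vector $(N^{\sharp_A}-M)y=-e^{i\theta}(I-P)T_{12}y$ lies in the kernel of $A$ and hence has zero $A$-seminorm; since the $A$-operator seminorm is a supremum over $\overline{R(A)}$, this yields $\|N^{\sharp_A}\|_A=\|M\|_A$.

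Combining the three steps gives $\|\mathrm{Re}_B(e^{i\theta}T)\|_B=\tfrac12\|e^{i\theta}T_{12}+e^{-i\theta}T_{21}^{\sharp_A}\|_A$ for each $\theta$, and taking the supremum over $\theta\in\mathbb{R}$ produces the asserted identity. I expect the main obstacle to be the bookkeeping with the projection $P$: because $A$-adjoints satisfy $(\,\cdot\,^{\sharp_A})^{\sharp_A}=P(\,\cdot\,)P$ rather than the identity, the lower-left block $N$ is not literally $M^{\sharp_A}$ but only agrees with it up to a summand of zero $A$-seminorm, and keeping track of this discrepancy is exactly what is needed for both the description of $T^{\sharp_B}$ and the final norm computation to go through.
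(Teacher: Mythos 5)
Your proof is correct and takes essentially the same route as the paper: the paper's own proof consists of invoking Zamani's identity $w_B(T)=\sup_{\theta\in\mathbb{R}}\|\mathrm{Re}_B(e^{i\theta}T)\|_B$ from \cite{Z} and then asserting the block computation as a ``simple calculation,'' which is precisely what you carry out in detail. One minor simplification you could make: since the lemma assumes $A>0$, $A$ is injective, so $\overline{R(A)}=\mathbb{H}$ and $P=I$; hence the projection bookkeeping you flag as the main obstacle is vacuous here, and your $N^{\sharp_A}$ equals $M$ exactly (the $A$-adjoint is even unique in this case).
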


\begin{proof}
We know (see \cite{Z}) that 
\begin{eqnarray*}
w_B(T)&=&\sup_{\theta\in\mathbb{R}}\|\textit{Re}_B(e^{i\theta}T)\|_B\\
&=&\frac{1}{2} \sup_{\theta\in\mathbb{R}} \|e^{i\theta}T+(e^{i\theta}T)^{\sharp_B}\|_B.
\end{eqnarray*}
 Then by simple calculation we get the required result.
\end{proof}

\begin{theorem}\label{th-3}
Let $T_{12},T_{21}\in B_A(\mathbb{H})$,  where $A> 0$.
If $T=\left(\begin{array}{cc}
O&T_{12}\\
T_{21}& O
\end{array}\right)$ and $B=\left(\begin{array}{cc}
A&O\\
O& A
\end{array}\right)$ then
\begin{eqnarray*}
w_B(T) &\leq& \left[\frac{1}{16}\|S\|_A^2+\frac{1}{4}w_A^2(T_{21}T_{12})+\frac{1}{8}w_A(T_{21}T_{12}S+ST_{21}T_{12})\right]^{\frac{1}{4}},
\end{eqnarray*}
where $S=T_{12}^{\sharp_A}T_{12}+T_{21}T_{21}^{\sharp_A}.$
\end{theorem}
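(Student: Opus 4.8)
The plan is to reduce everything to the $A$-seminorm of a single off-diagonal symbol and then apply an $A$-Cauchy--Schwarz step at the right moment. First I would invoke Lemma \ref{lem-2} to write $w_B(T)=\frac12\sup_{\theta\in\mathbb{R}}\|R_\theta\|_A$ with $R_\theta=e^{i\theta}T_{12}+e^{-i\theta}T_{21}^{\sharp_A}$. Since $T_{12},T_{21}\in B_A(\mathbb{H})$ and $A>0$ forces $\overline{R(A)}=\mathbb{H}$ (so that $(T^{\sharp_A})^{\sharp_A}=T$), the operator $M_\theta:=R_\theta^{\sharp_A}R_\theta$ is $A$-positive, and a direct expansion gives $M_\theta=S+e^{2i\theta}Q+e^{-2i\theta}Q^{\sharp_A}=S+2\,\textit{Re}_A(e^{2i\theta}Q)$, where $Q=T_{21}T_{12}$. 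Using $\|R_\theta\|_A^2=\|R_\theta^{\sharp_A}R_\theta\|_A=\|M_\theta\|_A$ together with $\langle M_\theta x,x\rangle_A=\|R_\theta x\|_A^2\ge 0$, I obtain $\|M_\theta\|_A=\sup_{\|x\|_A=1}\langle M_\theta x,x\rangle_A$, hence $16\,w_B^4(T)=\sup_\theta\|M_\theta\|_A^2=\sup_\theta\big(\sup_{\|x\|_A=1}\langle M_\theta x,x\rangle_A\big)^2$.

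The decisive step is to estimate the scalar quadratic form $\langle M_\theta x,x\rangle_A$ for a fixed $\|\cdot\|_A$-unit vector $x$ by the $A$-Cauchy--Schwarz inequality $\langle M_\theta x,x\rangle_A^2\le\langle M_\theta^2 x,x\rangle_A$, which is valid because $M_\theta$ is $A$-self-adjoint. Writing $R:=\textit{Re}_A(e^{2i\theta}Q)$ and expanding $M_\theta^2=S^2+2(SR+RS)+4R^2$, I would evaluate the three quadratic forms separately: $\langle S^2x,x\rangle_A=\|Sx\|_A^2\le\|S\|_A^2$, $\langle R^2x,x\rangle_A=\|Rx\|_A^2$, and $\langle(SR+RS)x,x\rangle_A$ is real. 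The key algebraic simplification is $SR+RS=\textit{Re}_A\big(e^{2i\theta}(QS+SQ)\big)$, which follows from $(Q^{\sharp_A})^{\sharp_A}=Q$ and $SQ^{\sharp_A}+Q^{\sharp_A}S=(QS+SQ)^{\sharp_A}$; consequently $\langle(SR+RS)x,x\rangle_A=\textit{Re}\big(e^{2i\theta}\langle(QS+SQ)x,x\rangle_A\big)\le w_A(QS+SQ)$. For the last term I would use that $R$ is $A$-self-adjoint, so $\|Rx\|_A\le\|R\|_A=w_A(R)=w_A(\textit{Re}_A(e^{2i\theta}Q))\le w_A(Q)$, giving $4\|Rx\|_A^2\le 4w_A^2(Q)$.

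Combining these three estimates yields $\langle M_\theta x,x\rangle_A^2\le\|S\|_A^2+2w_A(QS+SQ)+4w_A^2(Q)$ uniformly in $\theta$ and $x$. Taking the supremum over $x$ (legitimate since $\langle M_\theta x,x\rangle_A\ge0$) and then over $\theta$ gives $16\,w_B^4(T)\le\|S\|_A^2+4w_A^2(Q)+2w_A(QS+SQ)$, and the fourth root is precisely the asserted inequality.

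I expect the main obstacle to be identifying \emph{where} to invoke Cauchy--Schwarz. If one instead bounds $\|M_\theta\|_A^2=\|M_\theta^2\|_A$ directly by the triangle inequality applied to the full expansion of $M_\theta^2$, the contribution of $4R^2$ (which carries $QQ^{\sharp_A}+Q^{\sharp_A}Q$ and $2\,\textit{Re}_A(e^{4i\theta}Q^2)$) only yields $4\|Q\|_A^2$, strictly weaker than the required $4w_A^2(Q)$. The sharpness of the stated bound therefore hinges on keeping the estimate at the level of $\langle M_\theta x,x\rangle_A$ and using $\langle M_\theta x,x\rangle_A^2\le\langle M_\theta^2 x,x\rangle_A$ \emph{before} splitting, so that the diagonal term $\|Rx\|_A^2$ is controlled by the $A$-numerical radius $w_A(Q)$ rather than by $\|Q\|_A$. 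A secondary point to check carefully is the identity $SR+RS=\textit{Re}_A(e^{2i\theta}(QS+SQ))$, since the entire $w_A(QS+SQ)$ term originates from it.
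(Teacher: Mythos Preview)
Your proof is correct and follows essentially the same route as the paper: start from Lemma~\ref{lem-2}, set $M_\theta=S+2\,\textit{Re}_A(e^{2i\theta}Q)$ with $Q=T_{21}T_{12}$, pass from $\|M_\theta\|_A$ to $M_\theta^2$, expand, and bound the three pieces by $\|S\|_A^2$, $w_A(QS+SQ)$, and $w_A^2(Q)$.

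The only notable difference is cosmetic: the paper works at the operator-norm level, using $\|M_\theta\|_A^2=\|M_\theta^2\|_A$ (valid because $M_\theta$ is $A$-self-adjoint) and then the triangle inequality, whereas you work at the level of quadratic forms via the $A$-Cauchy--Schwarz inequality $\langle M_\theta x,x\rangle_A^2\le\langle M_\theta^2 x,x\rangle_A$. Your closing paragraph, however, is misleading: the paper does exactly the ``direct'' approach you warn against, and it succeeds because one does \emph{not} expand $R^2$ further into $QQ^{\sharp_A}+Q^{\sharp_A}Q+2\,\textit{Re}_A(e^{4i\theta}Q^2)$; since $R=\textit{Re}_A(e^{2i\theta}Q)$ is $A$-self-adjoint, one has immediately $\|4R^2\|_A=4\|R\|_A^2=4w_A^2(R)\le 4w_A^2(Q)$. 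So the sharpness of the $w_A^2(Q)$ term does not hinge on using Cauchy--Schwarz before splitting; it hinges on treating $R^2$ as the square of an $A$-self-adjoint operator rather than expanding it in terms of $Q$.
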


\begin{proof}
From Lemma \ref{lem-2}, we have
\begin{eqnarray*}
w_B(T)&=&\frac{1}{2}\sup_{\theta\in \mathbb{R}}\left\|e^{i\theta} T_{12}+e^{-i\theta} T^{\sharp_A}_{21}\right\|_A\\
&=&\frac{1}{2}\sup_{\theta\in \mathbb{R}}\left\|(e^{i\theta} T_{12}+e^{-i\theta} T^{\sharp_A}_{21})^{\sharp_A}(e^{i\theta} T_{12}+e^{-i\theta} T^{\sharp_A}_{21})\right\|_A^{\frac{1}{2}}\\
&=&\frac{1}{2}\sup_{\theta\in \mathbb{R}} \left\| S+2 Re_A(e^{2i\theta}T_{21}T_{12}) \right\|_A^{\frac{1}{2}}\\
&=&\frac{1}{2}\sup_{\theta\in \mathbb{R}} \left\|\left( S+2 Re_A(e^{2i\theta}T_{21}T_{12})\right)^2 \right\|_A^{\frac{1}{4}}\\
&=&\frac{1}{2}\sup_{\theta\in \mathbb{R}} \left\| S^2+4 \left(Re_A(e^{2i\theta}T_{21}T_{12})\right)^2+2 Re_A\left(e^{2i\theta}(T_{21}T_{12}S+ST_{21}T_{12})\right) \right\|_A^{\frac{1}{4}}\\
\Rightarrow w^4_B(T)&\leq& \frac{1}{16}\|S\|_A^2+\frac{1}{4}w_A^2(T_{21}T_{12})+\frac{1}{8}w_A(T_{21}T_{12}S+ST_{21}T_{12}).
\end{eqnarray*}
This completes the proof of the theorem.
\end{proof}
As a consequence of  Theorem \ref{th-3}, we obtain the following inequalities.
\begin{corollary}\label{cor-3}
Let $T_{12},T_{21}\in B_A(\mathbb{H})$, where $A> 0$.
If $T=\left(\begin{array}{cc}
O&T_{12}\\
T_{21}& O
\end{array}\right)$ and $B=\left(\begin{array}{cc}
A&O\\
O& A
\end{array}\right)$ then
\begin{eqnarray*}
w_B(T) &\leq& \left[\frac{1}{16}\|P\|_A^2+\frac{1}{4}w_A^2(T_{12}T_{21})+\frac{1}{8}w_A(T_{12}T_{21}P+PT_{12}T_{21})\right]^{\frac{1}{4}},
\end{eqnarray*}
where $P=T_{21}^{\sharp_A}T_{21}+T_{12}T_{12}^{\sharp_A}.$
\end{corollary}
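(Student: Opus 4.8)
The plan is to exploit the interchange symmetry of the off-diagonal entries already recorded in Lemma \ref{lem-4}(i), so that the corollary reduces to a single application of Theorem \ref{th-3} with the roles of $T_{12}$ and $T_{21}$ swapped. First I would set $T'=\left(\begin{array}{cc} O & T_{21}\\ T_{12} & O \end{array}\right)$, which is again an off-diagonal operator matrix whose entries $T_{21},T_{12}$ both lie in $B_A(\mathbb{H})$, so $T'$ satisfies the hypotheses of Theorem \ref{th-3}. By Lemma \ref{lem-4}(i) we have $w_B(T)=w_B(T')$, and hence any upper bound for $w_B(T')$ is simultaneously an upper bound for $w_B(T)$.

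Next I would apply Theorem \ref{th-3} verbatim to $T'$. In the statement of that theorem the $(1,2)$-entry plays the role written there as $T_{12}$ and the $(2,1)$-entry the role of $T_{21}$; for the matrix $T'$ these entries are $T_{21}$ and $T_{12}$ respectively. Carrying out the formal substitution $T_{12}\mapsto T_{21}$ and $T_{21}\mapsto T_{12}$ in the conclusion of Theorem \ref{th-3}, the quantity $S=T_{12}^{\sharp_A}T_{12}+T_{21}T_{21}^{\sharp_A}$ becomes $T_{21}^{\sharp_A}T_{21}+T_{12}T_{12}^{\sharp_A}=P$, the product $T_{21}T_{12}$ becomes $T_{12}T_{21}$, and the symmetrized expression $T_{21}T_{12}S+ST_{21}T_{12}$ becomes $T_{12}T_{21}P+PT_{12}T_{21}$. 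Collecting these substitutions produces exactly the bound $w_B(T')\leq\left[\frac{1}{16}\|P\|_A^2+\frac{1}{4}w_A^2(T_{12}T_{21})+\frac{1}{8}w_A(T_{12}T_{21}P+PT_{12}T_{21})\right]^{\frac{1}{4}}$, and combining this with $w_B(T)=w_B(T')$ yields the claimed inequality.

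There is essentially no obstacle here: the entire content of the corollary is the interchange symmetry furnished by Lemma \ref{lem-4}(i), and the remainder is bookkeeping that tracks how $S$ and the relevant operator products transform when $T_{12}$ and $T_{21}$ are exchanged. The only point requiring a moment's care is to verify that the substitution is applied consistently in \emph{every} term of the Theorem \ref{th-3} bound, so that $S$ is replaced by $P$ throughout and the noncommutative products $T_{21}T_{12}$ and $T_{12}T_{21}$ are not inadvertently conflated; once this consistency is checked, the inequality follows at once.
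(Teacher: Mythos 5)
Your proof is correct and is exactly the paper's argument: the paper's own proof of this corollary is the one-line statement that interchanging $T_{12}$ and $T_{21}$ in Theorem \ref{th-3} and invoking Lemma \ref{lem-4}~(i) gives the result. You have simply spelled out the bookkeeping (the matrix $T'$, the substitution sending $S$ to $P$ and $T_{21}T_{12}$ to $T_{12}T_{21}$) in full detail.
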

\begin{proof}
Interchanging $T_{12}$ and $T_{21}$ in Theorem \ref{th-3} and using Lemma \ref{lem-4} (i), we get the required inequality.
\end{proof}

\begin{corollary}\label{cor-4}
Let $T_{12},T_{21}\in B_A(\mathbb{H})$, where $A> 0$.
If $T=\left(\begin{array}{cc}
O&T_{12}\\
T_{21}& O
\end{array}\right)$ and $B=\left(\begin{array}{cc}
A&O\\
O& A
\end{array}\right)$ then
\begin{eqnarray*}
(i)~~ w_B(T)&\leq& \frac{1}{2}\sqrt{\|T_{12}^{\sharp_A}T_{12}+T_{21}T_{21}^{\sharp_A}\|_A+2w_A(T_{21}T_{12})},\\
(ii)~~ w_B(T)&\leq& \frac{1}{2}\sqrt{\|T_{12}T_{12}^{\sharp_A}+T_{21}^{\sharp_A}T_{21}\|_A+2w_A(T_{12}T_{21})}.
\end{eqnarray*}
\end{corollary}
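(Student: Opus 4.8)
The plan is to derive both inequalities in Corollary \ref{cor-4} directly from Theorem \ref{th-3} and Corollary \ref{cor-3} by dropping the two ``cross'' terms and taking a square root, much in the spirit of how Corollary \ref{corollary-2} was extracted from the sharper Corollary \ref{corollary-1}. The key observation is that Theorem \ref{th-3} already produces a fourth-power bound whose leading term is $\frac{1}{16}\|S\|_A^2$ with $S=T_{12}^{\sharp_A}T_{12}+T_{21}T_{21}^{\sharp_A}$, and I want to show that the messier bound collapses to the clean square-root form claimed in part $(i)$.

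First I would start from the midway estimate inside the proof of Theorem \ref{th-3}, namely
\[
w_B(T)=\frac{1}{2}\sup_{\theta\in\mathbb{R}}\left\|S+2\,\textit{Re}_A(e^{2i\theta}T_{21}T_{12})\right\|_A^{\frac{1}{2}}.
\]
Rather than squaring the inside as in Theorem \ref{th-3}, I would apply the triangle inequality for $\|\cdot\|_A$ together with the standard bound $\|\textit{Re}_A(e^{2i\theta}T_{21}T_{12})\|_A\leq w_A(T_{21}T_{12})$ (which follows from $w_A(\cdot)=\sup_\theta\|\textit{Re}_A(e^{i\theta}\,\cdot\,)\|_A$ as recalled in Lemma \ref{lem-2}). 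This immediately gives
\[
w_B(T)\leq\frac{1}{2}\left(\|S\|_A+2\,w_A(T_{21}T_{12})\right)^{\frac{1}{2}},
\]
which is exactly part $(i)$. For part $(ii)$ I would run the identical argument after interchanging the roles of $T_{12}$ and $T_{21}$ and invoking Lemma \ref{lem-4}$(i)$, which tells me that this interchange leaves $w_B(T)$ unchanged; this is precisely the move used to pass from Theorem \ref{th-3} to Corollary \ref{cor-3}, so part $(ii)$ is the $P$-analogue $P=T_{21}^{\sharp_A}T_{21}+T_{12}T_{12}^{\sharp_A}$ of part $(i)$.

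The one point requiring care, and the step I expect to be the main obstacle, is justifying $\sup_\theta\|\textit{Re}_A(e^{2i\theta}T_{21}T_{12})\|_A=w_A(T_{21}T_{12})$ in the semi-Hilbertian setting: the substitution $\phi=2\theta$ sweeps all of $\mathbb{R}$ so the factor of $2$ in the exponent is harmless, but I must confirm that the $A$-numerical radius genuinely admits the representation $w_A(R)=\sup_\theta\|\textit{Re}_A(e^{i\theta}R)\|_A$ for $R\in B_A(\mathbb{H})$, a fact attributed to \cite{Z} and used in Lemma \ref{lem-2}. Given that representation and the triangle inequality for the $A$-seminorm, the rest is routine. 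Alternatively, one can obtain the corollary even more cheaply by observing that $\left(a^2+b^2+\tfrac{1}{2}c\right)^{1/4}\le\left(a^2+b^2+c/2+\text{extra}\right)^{1/4}$ is dominated by $(a+b)^{1/2}$-type expressions, but the direct triangle-inequality route above is cleaner and avoids estimating the cross term $w_A(T_{21}T_{12}S+ST_{21}T_{12})$ altogether.
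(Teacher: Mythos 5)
Your proof is correct, but it follows a genuinely different route from the paper's. The paper deduces Corollary \ref{cor-4} formally from the statements of Theorem \ref{th-3} and Corollary \ref{cor-3}: expanding $\frac{1}{16}\left(\|S\|_A+2w_A(T_{21}T_{12})\right)^2$ shows that part (i) follows from Theorem \ref{th-3} precisely when $w_A(T_{21}T_{12}S+ST_{21}T_{12})\leq 2\|S\|_A\, w_A(T_{21}T_{12})$, and that anticommutator estimate is exactly what the paper imports from \cite[Cor. 3.3]{BPN}. You instead go back to the identity $w_B(T)=\frac{1}{2}\sup_{\theta\in\mathbb{R}}\left\|S+2\,\textit{Re}_A(e^{2i\theta}T_{21}T_{12})\right\|_A^{1/2}$ (obtained from Lemma \ref{lem-2} and $\|R\|_A^2=\|R^{\sharp_A}R\|_A$) and finish with the seminorm triangle inequality plus the representation $w_A(R)=\sup_{\theta}\|\textit{Re}_A(e^{i\theta}R)\|_A$; the point you flag as the possible obstacle is in fact unproblematic, since this representation from \cite{Z} holds for every $R\in B_A(\mathbb{H})$ (here $T_{21}T_{12}\in B_A(\mathbb{H})$ because $B_A(\mathbb{H})$ is a subalgebra) and is invoked by the paper itself in the proofs of Lemma \ref{lem-2} and Theorem \ref{th-p1}. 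Your reduction of (ii) to (i) via interchanging $T_{12}$ and $T_{21}$ and Lemma \ref{lem-4}(i) coincides with the paper's passage from Theorem \ref{th-3} to Corollary \ref{cor-3}. Comparing the two: your argument is self-contained within the paper's toolkit, needs no external inequality from \cite{BPN}, and never touches the cross term $w_A(T_{21}T_{12}S+ST_{21}T_{12})$; the paper's argument, on the other hand, simultaneously proves the corollary and makes explicit that the bound of Theorem \ref{th-3} dominates (is at least as sharp as) the simpler bound of Corollary \ref{cor-4}, a hierarchy your direct route does not establish. Your sketched ``cheaper'' alternative at the end is essentially the paper's route, and carrying it out would force you to estimate the very cross term you set out to avoid, so your main argument is the right one to keep.
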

\begin{proof}
The proof of (i) and (ii)  follows easily from Theorem \ref{th-3} and Corollary \ref{cor-3} respectively, by using the inequality in \cite[Cor. 3.3]{BPN}.
\end{proof}

\begin{remark}
Taking $T_{12}=T_{21}=T$ in Corollary \ref{cor-4} and using Lemma \ref{lem-1}, we get 
\[w_A(T)\leq \frac{1}{2}\sqrt{\|TT^{\sharp_A}+T^{\sharp_A}T\|_A+2w_A(T^2)}.\]
This inequality was also obtained in \cite[Th. 2.11]{Z}.
\end{remark}

Next we obtain the following lower bounds for the B-numerical radius of $2\times 2$ operator matrices.

\begin{theorem}\label{th-4}
Let $T_{12},T_{21}\in B_A(\mathbb{H})$,  where $A> 0$.
If $T=\left(\begin{array}{cc}
O&T_{12}\\
T_{21}& O
\end{array}\right)$ and $B=\left(\begin{array}{cc}
A&O\\
O& A
\end{array}\right)$ then
\[w_B(T)\geq  \frac{1}{2}\sqrt{\|T_{12}^{\sharp_A}T_{12}+T_{21}T_{21}^{\sharp_A}\|_A+2m_A(T_{21}T_{12})}.\]
\end{theorem}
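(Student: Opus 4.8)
The plan is to run the computation from the proof of Theorem \ref{th-3} in reverse, extracting a lower bound from the same exact identity rather than an upper bound. First I would invoke Lemma \ref{lem-2} together with the (already verified) manipulation in Theorem \ref{th-3} showing that, with $S=T_{12}^{\sharp_A}T_{12}+T_{21}T_{21}^{\sharp_A}$,
\[
4w_B^2(T)=\sup_{\theta\in\mathbb{R}}\big\|S+2\,\textit{Re}_A(e^{2i\theta}T_{21}T_{12})\big\|_A .
\]
This identity is the heart of the matter: the upper bound in Theorem \ref{th-3} was obtained by squaring inside the norm, whereas here I will keep the expression unsquared and probe it with a carefully chosen vector. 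Write $R_\theta=S+2\,\textit{Re}_A(e^{2i\theta}T_{21}T_{12})$, which is $A$-self-adjoint for every $\theta$.

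Next, fix a unit $A$-norm vector $x$ (that is, $\|x\|_A=1$). Since $R_\theta$ is $A$-self-adjoint, $\langle R_\theta x,x\rangle_A$ is real and $\|R_\theta\|_A\ge |\langle R_\theta x,x\rangle_A|\ge \langle R_\theta x,x\rangle_A$, the first inequality coming from $w_A(R_\theta)\le\|R_\theta\|_A$. A short computation using $\langle \textit{Re}_A(W)x,x\rangle_A=\textit{Re}\,\langle Wx,x\rangle_A$ gives
\[
\langle R_\theta x,x\rangle_A=\langle Sx,x\rangle_A+2\,\textit{Re}\big(e^{2i\theta}\langle T_{21}T_{12}x,x\rangle_A\big).
\]
Because $e^{2i\theta}$ sweeps the entire unit circle as $\theta$ varies, I may choose $\theta=\theta(x)$ to align the phase so that $e^{2i\theta}\langle T_{21}T_{12}x,x\rangle_A=|\langle T_{21}T_{12}x,x\rangle_A|$. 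For that choice of $\theta$,
\[
\sup_{\theta}\|R_\theta\|_A\ \ge\ \langle Sx,x\rangle_A+2\,|\langle T_{21}T_{12}x,x\rangle_A|\ \ge\ \langle Sx,x\rangle_A+2\,m_A(T_{21}T_{12}),
\]
the last step being the definition of the $A$-Crawford number as an infimum over all unit $A$-vectors.

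Finally I would take the supremum over $x$ with $\|x\|_A=1$ on the right-hand side. Since $S$ is $A$-positive, $\sup_{\|x\|_A=1}\langle Sx,x\rangle_A=w_A(S)=\|S\|_A$, whence $4w_B^2(T)\ge \|S\|_A+2m_A(T_{21}T_{12})$ and taking square roots yields the claimed bound. The one step requiring care, and the main obstacle, is the order in which the two estimates are combined: the left term wants $x$ to nearly attain $\|S\|_A$, while the right term is a uniform lower bound by $m_A(T_{21}T_{12})$ valid for \emph{every} unit $A$-vector. It is precisely because $m_A$ is an infimum that its bound survives when $x$ is pushed toward a maximizer of $\langle Sx,x\rangle_A$; this asymmetry (together with the freedom to pick $\theta$ after $x$) is what makes the sum $\|S\|_A+2m_A(T_{21}T_{12})$ a legitimate lower bound, and it is what distinguishes the argument from the upper-bound route taken in Theorem \ref{th-3}.
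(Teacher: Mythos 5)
Your proposal is correct and is essentially the paper's own proof: both start from Lemma \ref{lem-2}, expand $\|e^{i\theta}T_{12}+e^{-i\theta}T_{21}^{\sharp_A}\|_A^2$ into $\|S+2\,\textit{Re}_A(e^{2i\theta}T_{21}T_{12})\|_A$ with $S=T_{12}^{\sharp_A}T_{12}+T_{21}T_{21}^{\sharp_A}$, choose $\theta$ \emph{after} fixing the unit $A$-vector $x$ so as to align the phase of $\langle T_{21}T_{12}x,x\rangle_A$, lower-bound the operator $A$-seminorm by the quadratic form at $x$, replace $|\langle T_{21}T_{12}x,x\rangle_A|$ by the infimum $m_A(T_{21}T_{12})$, and finally take the supremum over $x$. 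The only differences are cosmetic: you work with $4w_B^2(T)$ instead of carrying square roots through each line, and you cite the expansion already performed in Theorem \ref{th-3} rather than repeating it.
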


\begin{proof}
Let $x \in \mathbb{H}$ with $\|x\|_A=1$. Let $\theta\in \mathbb{R}$ be such that $e^{2i\theta}\langle T_{21}T_{12}x,x\rangle_A=|\langle T_{21}T_{12}x,x\rangle_A|.$ From Lemma \ref{lem-2} we have
\begin{eqnarray*}
w_B(T)&\geq &\frac{1}{2}\left\|e^{i\theta} T_{12}+e^{-i\theta} T^{\sharp_A}_{21}\right\|_A\\
&=&\frac{1}{2}\left\|(e^{i\theta} T_{12}+e^{-i\theta} T^{\sharp_A}_{21})^{\sharp_A}(e^{i\theta} T_{12}+e^{-i\theta} T^{\sharp_A}_{21})\right\|_A^{\frac{1}{2}}\\
&=&\frac{1}{2}\left\| T_{12}^{\sharp_A}T_{12}+T_{21}T_{21}^{\sharp_A}+2 Re_A(e^{2i\theta}T_{21}T_{12}) \right\|_A^{\frac{1}{2}}\\
&\geq&\frac{1}{2}\left| \langle(T_{12}^{\sharp_A}T_{12}+T_{21}T_{21}^{\sharp_A})x,x\rangle_A+2 \langle Re_A(e^{2i\theta}T_{21}T_{12})x,x\rangle_A \right|^{\frac{1}{2}}\\
&\geq&\frac{1}{2}\left| \langle(T_{12}^{\sharp_A}T_{12}+T_{21}T_{21}^{\sharp_A})x,x\rangle_A+2 Re( e^{2i\theta}\langle T_{21}T_{12} x,x\rangle_A) \right|^{\frac{1}{2}}\\
&=&\frac{1}{2}\left| \langle(T_{12}^{\sharp_A}T_{12}+T_{21}T_{21}^{\sharp_A})x,x\rangle_A+2 |\langle T_{21}T_{12} x,x\rangle_A| \right|^{\frac{1}{2}}\\
&\geq&\frac{1}{2}\left( \langle(T_{12}^{\sharp_A}T_{12}+T_{21}T_{21}^{\sharp_A})x,x\rangle_A+2 m_A( T_{21}T_{12} ) \right)^{\frac{1}{2}}.\\
\end{eqnarray*}
Taking supremum over $x \in \mathbb{H}, \|x\|_A=1$, we get
\[w_B(T)\geq  \frac{1}{2}\sqrt{\|T_{12}^{\sharp_A}T_{12}+T_{21}T_{21}^{\sharp_A}\|_A+2m_A(T_{21}T_{12})}.\] This completes the proof.
\end{proof}

As a consequence of  Theorem \ref{th-4}, we get the following inequality.

\begin{corollary}\label{cor-5}
Let $T_{12},T_{21}\in B_A(\mathbb{H})$, where $A> 0$.
If $T=\left(\begin{array}{cc}
O&T_{12}\\
T_{21}& O
\end{array}\right)$ and $B=\left(\begin{array}{cc}
A&O\\
O& A
\end{array}\right)$ then
\begin{eqnarray*}
w_B(T)&\geq& \frac{1}{2}\sqrt{\|T_{12}T_{12}^{\sharp_A}+T_{21}^{\sharp_A}T_{21}\|_A+2m_A(T_{12}T_{21})}.
\end{eqnarray*}
\end{corollary}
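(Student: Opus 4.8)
The plan is to deduce this lower bound directly from Theorem \ref{th-4} by exactly the symmetry device already used to pass from Theorem \ref{th-3} to Corollary \ref{cor-3}. The crucial point is that Theorem \ref{th-4} holds for an \emph{arbitrary} admissible pair occupying the two off-diagonal positions, so I am free to reapply it after interchanging the two blocks.

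First I would introduce the swapped matrix $\tilde{T}=\left(\begin{array}{cc} O&T_{21}\\ T_{12}& O \end{array}\right)$, which again has off-diagonal entries in $B_A(\mathbb{H})$, so the hypotheses of Theorem \ref{th-4} are met. Applying that theorem to $\tilde T$, i.e. with the roles of $T_{12}$ and $T_{21}$ exchanged throughout, yields
\[w_B(\tilde T)\geq \frac{1}{2}\sqrt{\|T_{21}^{\sharp_A}T_{21}+T_{12}T_{12}^{\sharp_A}\|_A+2m_A(T_{12}T_{21})}.\]
Next I would invoke Lemma \ref{lem-4}(i), which asserts that interchanging the two off-diagonal blocks leaves the $B$-numerical radius unchanged, so that $w_B(\tilde T)=w_B(T)$. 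Substituting this equality into the displayed inequality, and rewriting the sum inside the $A$-norm in the order $\|T_{12}T_{12}^{\sharp_A}+T_{21}^{\sharp_A}T_{21}\|_A$, produces exactly the asserted bound.

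There is essentially no analytic obstacle here, since all the real work — the application of Lemma \ref{lem-2}, the $A$-norm expansion, and the choice of $\theta$ realising the modulus — was already carried out in the proof of Theorem \ref{th-4}. The only step deserving a moment's care is confirming that Lemma \ref{lem-4}(i) genuinely applies to $\tilde T$, namely that the block structure and the admissibility $T_{12},T_{21}\in B_A(\mathbb{H})$ are preserved under the swap; both are immediate.
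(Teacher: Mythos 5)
Your proposal is correct and is precisely the paper's own proof: interchange $T_{12}$ and $T_{21}$ in Theorem \ref{th-4} and then invoke Lemma \ref{lem-4}(i) to identify $w_B(\tilde T)$ with $w_B(T)$. The substitution into Theorem \ref{th-4} is carried out accurately (the $(1,2)$ and $(2,1)$ roles are exchanged consistently, giving $m_A(T_{12}T_{21})$), so nothing further is needed.
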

\begin{proof}
Interchanging $T_{12}$ and $T_{21}$ in Theorem \ref{th-4} and then using Lemma \ref{lem-4} (i), we get the required inequality.
\end{proof}

\begin{remark}
Taking $T_{12}=T_{21}=T$ in Corollary \ref{cor-5} and using Lemma \ref{lem-1}, we get 
\[w_A(T)\geq \frac{1}{2}\sqrt{\|TT^{\sharp_A}+T^{\sharp_A}T\|_A+2m_A(T^2)}.\]
This inequality was also obtained in \cite[Th. 2.12]{Z}.
\end{remark}

We now obtain the following upper bound for the B-numerical radius of $n\times n$ operator matrices, where $B$ is an $n\times n$ diagonal operator matrix whose each diagonal entry is a positive operator  $A$.

\begin{theorem}\label{th-1}
Let $T=(T_{ij})$ be an $n \times n$ operator matrix with $T_{ij}\in B_A(\mathbb{H})$ and $B=$ \textit{diag} $(A,A,\ldots,A)$ be an  $n \times n$ diagonal operator matrix,  where $A\geq 0$. Let $T'=(t_{ij})_{n\times n}$ where
\[t_{ij}= \begin{cases}
w_{A}(T_{ij}), &  i=j \\
\|T_{ij}\|_A, &  i\neq j.
\end{cases}
\] 
Then \, $w_B(T)\leq w(T').$

\end{theorem}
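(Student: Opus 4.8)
The plan is to follow the template of the proofs of Theorems \ref{th-gh} and \ref{th-gk1}, passing from a vector in $\oplus_{i=1}^n\mathbb{H}$ to its vector of $A$-seminorms in $\mathbb{C}^n$, and then reading off the resulting scalar estimate as $\langle T'\tilde{x},\tilde{x}\rangle$. First I would fix $x=(x_1,\ldots,x_n)^t\in\oplus_{i=1}^n\mathbb{H}$ with $\|x\|_B=1$. Since $B=\textit{diag}(A,\ldots,A)$, the induced seminorm splits as $\|x\|_B^2=\sum_{i=1}^n\langle Ax_i,x_i\rangle=\sum_{i=1}^n\|x_i\|_A^2$, so that $\tilde{x}=(\|x_1\|_A,\ldots,\|x_n\|_A)^t$ is a unit vector in $\mathbb{C}^n$. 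Expanding the $B$-inner product gives
\[ \langle Tx,x\rangle_B=\sum_{i,j=1}^n\langle T_{ij}x_j,x_i\rangle_A, \]
and the triangle inequality reduces everything to estimating the individual summands.

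The heart of the argument is to treat the diagonal and off-diagonal terms separately, matching the two cases in the definition of $t_{ij}$. For $i=j$ I would use $|\langle T_{ii}x_i,x_i\rangle_A|\leq w_A(T_{ii})\|x_i\|_A^2$, which is immediate after normalizing $x_i$ when $\|x_i\|_A\neq0$, and follows from $|\langle T_{ii}x_i,x_i\rangle_A|\leq\|T_{ii}x_i\|_A\|x_i\|_A\leq\|T_{ii}\|_A\|x_i\|_A^2=0$ in the degenerate case $\|x_i\|_A=0$. For $i\neq j$, the Cauchy--Schwarz inequality for the positive semi-definite form $\langle\cdot,\cdot\rangle_A$ combined with $\|T_{ij}x_j\|_A\leq\|T_{ij}\|_A\|x_j\|_A$ yields $|\langle T_{ij}x_j,x_i\rangle_A|\leq\|T_{ij}\|_A\|x_i\|_A\|x_j\|_A$. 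Summing these bounds gives precisely
\[ |\langle Tx,x\rangle_B|\leq\sum_{i=1}^n w_A(T_{ii})\|x_i\|_A^2+\sum_{\substack{i,j=1\\ i\neq j}}^n\|T_{ij}\|_A\|x_i\|_A\|x_j\|_A=\langle T'\tilde{x},\tilde{x}\rangle, \]
since the entries of $\tilde{x}$ are real and nonnegative. As $\tilde{x}$ is a unit vector and the last quantity is real, it is at most $w(T')$; taking the supremum over all $x$ with $\|x\|_B=1$ finishes the proof.

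I do not anticipate a serious obstacle, since the scheme is identical to the Euclidean arguments already used in Section~$3$; the only points needing care are the semi-Hilbertian subtleties, namely verifying Cauchy--Schwarz for the merely positive semi-definite form $\langle\cdot,\cdot\rangle_A$ and disposing of the degenerate vectors with $\|x_i\|_A=0$, both handled via the submultiplicativity $\|T_{ij}x_j\|_A\leq\|T_{ij}\|_A\|x_j\|_A$ valid for operators in $B_A(\mathbb{H})$.
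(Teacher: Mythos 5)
Your proposal is correct and follows essentially the same argument as the paper: split $\langle Tx,x\rangle_B$ into diagonal and off-diagonal terms, bound them by $w_A(T_{ii})\|x_i\|_A^2$ and $\|T_{ij}\|_A\|x_i\|_A\|x_j\|_A$ respectively, recognize the sum as $\langle T'\tilde{x},\tilde{x}\rangle\leq w(T')$, and take the supremum. Your extra care with the degenerate case $\|x_i\|_A=0$ and with Cauchy--Schwarz for the semi-definite form is a nice touch that the paper leaves implicit, but it does not change the route.
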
	
			
\begin{proof}
Let $x=(x_1,x_2,\ldots,x_n)^t\in \oplus^n_{i=1} \mathbb{H}$ with $\|x\|_B=1$ and $\tilde{x_A}=(\|x_1\|_A,\|x_2\|_A,\ldots,\|x_n\|_A)^t$. Then clearly $\tilde{x_A}$ is a unit vector in $\mathbb{C}^n$. Now
\begin{eqnarray*}
 |\langle Tx,x\rangle_B| &=& |\sum_{i,j=1}^n\langle T_{ij}x_j,x_i\rangle_A|\\
	 &\leq& \sum_{i,j=1}^n|\langle T_{ij}x_j,x_i\rangle_A|\\
	&=& \sum_{i=1}^n|\langle T_{ii}x_i,x_i\rangle_A|+\sum_{i,j=1,i\neq j}^n|\langle T_{ij}x_j,x_i\rangle_A|\\
	&\leq& \sum_{i=1}^n w_A(T_{ii})\|x_i\|^2_A+\sum_{i,j=1,i\neq j}^n\|T_{ij}\|_A\|x_j\|_A\|x_i\|_A\\
	&=& \sum_{i,j=1}^n t_{ij}\|x_j\|_A\|x_i\|_A\\
	&=& \langle T'\tilde{x_A},\tilde{x_A}\rangle\\
	&\leq& w(T').
\end{eqnarray*}
Taking supremum over $x \in \oplus^n_{i=1} \mathbb{H}, $ $\|x\|_B=1$, we get the desired inequality.
\end{proof}

\begin{remark}
In particular, if we consider $A=I$ in Theorem \ref{th-1} then we get the inequality \cite[Th. 1]{AK}, obtained by Abu-Omar and Kittaneh.
\end{remark}

\noindent Next we state the following lemma, which can be found in \cite[p. 44]{HJ}.
\begin{lemma}\label{l-10}
Let $T=(t_{ij})\in M_n(\mathbb{C})$ such that $t_{ij}\geq 0$ for all $i,j=1,2,\ldots,n.$ Then $$w(T)=\frac{1}{2}r\left (t_{ij}+t_{ji}\right).$$
\end{lemma}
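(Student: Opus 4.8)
The plan is to use the standard characterization of the numerical radius as a supremum over rotations of the largest eigenvalue of the real part, and then to exploit Perron--Frobenius theory together with the entrywise monotonicity of the spectral radius of non-negative matrices. Throughout, write $M^*=(m_{ji})$ for the conjugate transpose of $M=(m_{ij})$; since the entries $t_{ij}$ are real, the matrix $(t_{ij}+t_{ji})$ appearing in the statement is exactly $T+T^*$.

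First I would recall that for any $M\in M_n(\mathbb{C})$,
\[
w(M)=\sup_{\theta\in\mathbb{R}}\lambda_{\max}\!\left(\tfrac{1}{2}(e^{i\theta}M+e^{-i\theta}M^*)\right),
\]
which follows from $|\langle Mx,x\rangle|=\sup_{\theta}\mathrm{Re}(e^{i\theta}\langle Mx,x\rangle)=\sup_{\theta}\langle\tfrac{1}{2}(e^{i\theta}M+e^{-i\theta}M^*)x,x\rangle$ and the fact that the largest value of $\langle Hx,x\rangle$ over unit vectors $x$ equals the largest eigenvalue $\lambda_{\max}(H)$ of the Hermitian matrix $H$. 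Applying this with $\theta=0$ produces the candidate value: the matrix $H_0:=\tfrac{1}{2}(T+T^*)=\tfrac{1}{2}(t_{ij}+t_{ji})$ is symmetric with non-negative entries, so by Perron--Frobenius its spectral radius is an eigenvalue, whence $\lambda_{\max}(H_0)=r(H_0)=\tfrac{1}{2}r(t_{ij}+t_{ji})$. This already yields the lower bound $w(T)\geq\tfrac{1}{2}r(t_{ij}+t_{ji})$.

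The substance of the proof is the reverse inequality, namely that the supremum over $\theta$ is attained at $\theta=0$. For arbitrary $\theta$ set $H_\theta:=\tfrac{1}{2}(e^{i\theta}T+e^{-i\theta}T^*)$, whose $(i,j)$ entry is $\tfrac{1}{2}(e^{i\theta}t_{ij}+e^{-i\theta}t_{ji})$ and hence has modulus at most $\tfrac{1}{2}(t_{ij}+t_{ji})$. Writing $|H_\theta|$ for the entrywise-modulus matrix, we obtain $0\leq|H_\theta|\leq H_0$ entrywise. For any unit vector $x$, estimating $\langle H_\theta x,x\rangle=\sum_{i,j}(H_\theta)_{ij}\,x_j\overline{x_i}$ by the triangle inequality gives $|\langle H_\theta x,x\rangle|\leq\langle|H_\theta|\,|x|,|x|\rangle\leq\lambda_{\max}(|H_\theta|)$, where $|x|=(|x_1|,\dots,|x_n|)^t$ is again a unit vector. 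Consequently $\lambda_{\max}(H_\theta)\leq\lambda_{\max}(|H_\theta|)=r(|H_\theta|)$, the last equality again by Perron--Frobenius applied to the symmetric non-negative matrix $|H_\theta|$. Finally, monotonicity of the spectral radius under entrywise domination of non-negative matrices gives $r(|H_\theta|)\leq r(H_0)=\tfrac{1}{2}r(t_{ij}+t_{ji})$. Taking the supremum over $\theta$ and combining with the previous paragraph yields $w(T)=\tfrac{1}{2}r(t_{ij}+t_{ji})$.

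The main obstacle is the chain of inequalities in the last paragraph: it relies on two standard but essential facts about non-negative matrices — that the spectral radius of a symmetric non-negative matrix is its largest eigenvalue, and that $0\leq B\leq C$ entrywise implies $r(B)\leq r(C)$. Care is also needed to verify that $|x|$ remains a unit vector and that the entrywise bound $|(H_\theta)_{ij}|\leq\tfrac{1}{2}(t_{ij}+t_{ji})$ holds uniformly in $\theta$, which is precisely what forces the optimal rotation to be $\theta=0$.
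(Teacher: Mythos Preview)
Your argument is correct. The paper does not actually prove this lemma: it merely states the result and cites \cite[p.~44]{HJ} (Horn--Johnson, \emph{Topics in Matrix Analysis}). Your proposal therefore supplies a complete, self-contained proof where the paper has none.

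The route you take---the rotation formula $w(T)=\sup_{\theta}\lambda_{\max}\big(\tfrac12(e^{i\theta}T+e^{-i\theta}T^{*})\big)$, followed by the entrywise domination $|H_{\theta}|\le H_{0}$ and the Perron--Frobenius consequences that $\lambda_{\max}=r$ for symmetric non-negative matrices and that $r$ is monotone under entrywise comparison---is essentially the classical argument behind the Horn--Johnson statement. All the steps you flag as potential obstacles (that $|x|$ is still a unit vector, that $|H_{\theta}|$ is symmetric because $(H_{\theta})_{ji}=\overline{(H_{\theta})_{ij}}$, and that the entrywise bound holds uniformly in $\theta$) do indeed hold, so the chain $\lambda_{\max}(H_{\theta})\le r(|H_{\theta}|)\le r(H_{0})$ is valid and the supremum is attained at $\theta=0$.
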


As a consequence of Theorem \ref{th-1}, we get the following inequality.

\begin{corollary}\label{cor-1}
Let $T=(T_{ij})$ be a $2 \times 2$ operator matrix with $T_{ij}\in B_A(\mathbb{H})$ and $B=$ \textit{diag}$(A,A)$ be the $2 \times 2$ diagonal operator matrix, where $A\geq 0$. Then
\[w_B(T)\leq \frac{1}{2}\left[ w_A(T_{11})+w_A(T_{22})+\sqrt{\Big(w_A(T_{11})-w_A(T_{22})\Big)^2+\Big(\|T_{12}\|_A+\|T_{21}\|_A\Big)^2}\right].\]
\end{corollary}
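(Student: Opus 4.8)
The plan is to derive this as a direct specialization of Theorem~\ref{th-1} to the case $n=2$, followed by an explicit evaluation of the resulting scalar numerical radius via Lemma~\ref{l-10}. First I would apply Theorem~\ref{th-1} with $n=2$ to the operator matrix $T=(T_{ij})$. This immediately yields $w_B(T)\le w(T')$, where $T'$ is the $2\times2$ matrix with nonnegative real entries
\[
T'=\begin{pmatrix} w_A(T_{11}) & \|T_{12}\|_A \\ \|T_{21}\|_A & w_A(T_{22}) \end{pmatrix}.
\]
It then remains only to compute $w(T')$ in closed form.

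Since every entry of $T'$ is a nonnegative real number, Lemma~\ref{l-10} applies and gives $w(T')=\tfrac12\, r(S)$, where $S$ is the symmetric matrix obtained by replacing the $(i,j)$ entry $t_{ij}$ of $T'$ by $t_{ij}+t_{ji}$, namely
\[
S=\begin{pmatrix} 2w_A(T_{11}) & \|T_{12}\|_A+\|T_{21}\|_A \\ \|T_{12}\|_A+\|T_{21}\|_A & 2w_A(T_{22}) \end{pmatrix}.
\]
Writing $a=2w_A(T_{11})$, $b=2w_A(T_{22})$ and $c=\|T_{12}\|_A+\|T_{21}\|_A$, the two eigenvalues of $S$ are $\tfrac{a+b}{2}\pm\sqrt{\bigl(\tfrac{a-b}{2}\bigr)^2+c^2}$. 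Substituting these values back and multiplying by $\tfrac12$ reproduces exactly the claimed right-hand side.

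The argument is essentially a one-line reduction plus a routine $2\times2$ eigenvalue computation, so there is no genuine obstacle. The only point that warrants a word of justification is the final identification of $r(S)$ with the \emph{larger} eigenvalue rather than the modulus of the smaller one; this is guaranteed by the nonnegativity of $a,b,c$ (equivalently, by Perron--Frobenius applied to the entrywise-nonnegative matrix $S$), which forces the spectral radius to be attained at the positive eigenvalue $\tfrac{a+b}{2}+\sqrt{(\tfrac{a-b}{2})^2+c^2}$.
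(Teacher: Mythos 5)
Your proposal is correct and follows exactly the paper's own route: specialize Theorem~\ref{th-1} to $n=2$ and then evaluate the resulting scalar numerical radius via Lemma~\ref{l-10}. The only difference is that you make explicit the $2\times 2$ eigenvalue computation and the identification of the spectral radius with the larger eigenvalue, which the paper leaves to the reader.
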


\begin{proof}
From Theorem \ref{th-1} we have
\begin{eqnarray*}
w_B(T)&\leq& w\left(\begin{array}{cc}
w_A(T_{11})&\|T_{12}\|_A \\
\|T_{21}\|_A& w_A(T_{22})
\end{array}\right)\\
&=& \frac{1}{2} r\left(\begin{array}{cc}
2 w_A(T_{11})&\|T_{12}\|_A+\|T_{21}\|_A \\
\|T_{12}\|_A+\|T_{21}\|_A& 2 w_A(T_{22})
\end{array}\right),~~ \mbox{using Lemma}~~ \ref{l-10}.\\
\end{eqnarray*} 
This completes the proof.
\end{proof}

Next we obtain another upper bound for the B-numerical radius of $n\times n$ operator matrices. To do so, we need the following lemma.

\begin{lemma}\label{lem-3g}
Let $T=(T_{ij})$ be an $n \times n$ operator matrix with $T_{ij}\in B_A(\mathbb{H})$ and $B=$ \textit{diag} $(A,A,\ldots,A)$ be an  $n \times n$ diagonal operator matrix,  where $A\geq 0$. Then
\[\|T\|_B\leq \Big\|\Big(\|T_{ij}\|_A\Big)\Big\|.\]
\end{lemma}
\begin{proof}
Let $x=(x_1,x_2,\ldots,x_n)^t\in \oplus^n_{i=1} \mathbb{H}$ with $\|x\|_B=1$ and $\tilde{x_A}=(\|x_1\|_A,\|x_2\|_A,\ldots,\|x_n\|_A)^t$.  Then clearly $\tilde{x_A}$ is  a unit vector in $\mathbb{C}^n$.  We have 
\begin{eqnarray*}
\|T\|_B^2 &=& \sup_{\|x\|_B=1}\|Tx\|_B^2\\
&=& \sup_{\|x\|_B=1}|\langle Tx, Tx\rangle_B|\\
&=& \sup_{\|x\|_B=1}\Big|\sum_{i=1}^n\sum_{j=1}^n\sum_{k=1}^n\langle T_{kj}x_j, T_{ki}x_i\rangle_A\Big|\\
&\leq&\sup_{\|x\|_B=1}\sum_{i=1}^n\sum_{j=1}^n\sum_{k=1}^n|\langle T_{kj}x_j, T_{ki}x_i\rangle_A|\\
&=&\sup_{\|x\|_B=1}\sum_{i=1}^n\sum_{j=1}^n\sum_{k=1}^n|\langle T_{ki}^{\sharp_A}T_{kj}x_j, x_i\rangle_A|\\
&\leq &\sup_{\|x\|_B=1}\sum_{i=1}^n\sum_{j=1}^n\sum_{k=1}^n\| T_{ki}\|_A\|T_{kj}\|_A\|x_j\|_A\| x_i\|_A\\
&= &\sup_{\|x\|_B=1}\Big\langle \big(\|T_{ij}\|_A\big)^*\big(\|T_{ij}\|_A\big)\tilde{x_A},\tilde{x_A}\Big\rangle\\
&=&\Big\|\big(\|T_{ij}\|_A\big)^*\big(\|T_{ij}\|_A\big) \Big\|=\Big \|\big(\|T_{ij}\|_A\big) \Big\|^2.
\end{eqnarray*}
This completes the proof of the lemma.
\end{proof}

\begin{theorem}\label{th-2}
Let $T=(T_{ij})$ be an $n \times n$ operator matrix with $T_{ij}\in B_A(\mathbb{H})$ and $B=$ \textit{diag} $(A,A,\ldots,A)$ be an  $n \times n$ diagonal operator matrix,  where $A> 0$. If $C=\left(\begin{array}{cc}
A&O\\
O&A
\end{array}\right)$ then
\[w_B(T)\leq w(T{''}),\]
where $T{''}=(t'_{ij})_{n\times n}$ and 
$t'_{ij}= \begin{cases}
w_{A}(T_{ij}), &   i=j \\
w_C\left(\begin{array}{cc}
O&T_{ij}\\
T_{ji}& O
\end{array}\right), &  i \neq j.
\end{cases}
$ 
\end{theorem}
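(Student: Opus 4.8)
The plan is to follow the proof of Theorem~\ref{th-1} verbatim for the diagonal part, and to replace its crude off-diagonal estimate $|\langle T_{ij}x_j,x_i\rangle_A|\leq\|T_{ij}\|_A\|x_i\|_A\|x_j\|_A$ by a sharper estimate that treats the symmetric pair $(i,j)$ and $(j,i)$ together. Fix $x=(x_1,\ldots,x_n)^t\in\oplus^n_{i=1}\mathbb{H}$ with $\|x\|_B=1$ and set $\tilde{x_A}=(\|x_1\|_A,\ldots,\|x_n\|_A)^t$, a unit vector in $\mathbb{C}^n$. Expanding $\langle Tx,x\rangle_B$ and separating diagonal from off-diagonal entries gives
\[|\langle Tx,x\rangle_B|\leq\sum_{i=1}^n|\langle T_{ii}x_i,x_i\rangle_A|+\sum_{i<j}\Big(|\langle T_{ij}x_j,x_i\rangle_A|+|\langle T_{ji}x_i,x_j\rangle_A|\Big).\]
The diagonal terms are bounded by $w_A(T_{ii})\|x_i\|_A^2$ exactly as in Theorem~\ref{th-1}, so everything reduces to estimating the off-diagonal pairs.

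The heart of the argument is the pairwise bound, which I would isolate as a separate claim: for $i\neq j$,
\[|\langle T_{ij}x_j,x_i\rangle_A|+|\langle T_{ji}x_i,x_j\rangle_A|\leq 2\,w_C\left(\begin{array}{cc}O&T_{ij}\\T_{ji}&O\end{array}\right)\|x_i\|_A\|x_j\|_A.\]
To prove it, put $S=\left(\begin{array}{cc}O&T_{ij}\\T_{ji}&O\end{array}\right)$, $a=\|x_i\|_A$, $b=\|x_j\|_A$ (we may assume $a,b>0$, since otherwise the left-hand side vanishes by Cauchy--Schwarz for $\langle\cdot,\cdot\rangle_A$), and feed $w_C$ the weighted vector $y=(b\,e^{i\gamma}x_i,\,a\,x_j)^t\in\mathbb{H}\oplus\mathbb{H}$. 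A direct computation yields $\langle Sy,y\rangle_C=ab\,e^{-i\gamma}\langle T_{ij}x_j,x_i\rangle_A+ab\,e^{i\gamma}\langle T_{ji}x_i,x_j\rangle_A$, and choosing $\gamma=\tfrac12\big(\arg\langle T_{ij}x_j,x_i\rangle_A-\arg\langle T_{ji}x_i,x_j\rangle_A\big)$ aligns the two summands, so that $|\langle Sy,y\rangle_C|=ab\big(|\langle T_{ij}x_j,x_i\rangle_A|+|\langle T_{ji}x_i,x_j\rangle_A|\big)$. Since $\|y\|_C^2=b^2a^2+a^2b^2=2a^2b^2$, the inequality $w_C(S)\geq|\langle Sy,y\rangle_C|/\|y\|_C^2$ gives precisely the claim.

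The one delicate point, and what I expect to be the main obstacle, is the choice of weights $b$ on $x_i$ and $a$ on $x_j$: the unweighted test vector $(e^{i\gamma}x_i,x_j)^t$ only produces the denominator $a^2+b^2$ and hence the weaker bound with $a^2+b^2$ in place of $2ab$, which would not match $\langle T''\tilde{x_A},\tilde{x_A}\rangle$; the weighting is exactly what converts $a^2+b^2$ into $2ab$ via $\|y\|_C^2=2a^2b^2$ (equivalently, it is the AM--GM optimization minimizing $\|y\|_C^2/(ab)$). To close the proof I would invoke Lemma~\ref{lem-4}(i), which gives $t'_{ij}=t'_{ji}$, so that $T''$ is a symmetric matrix with nonnegative entries and $\sum_{i<j}2\,t'_{ij}\|x_i\|_A\|x_j\|_A=\sum_{i\neq j}t'_{ij}\|x_i\|_A\|x_j\|_A$. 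Combining the diagonal and off-diagonal estimates then yields $|\langle Tx,x\rangle_B|\leq\langle T''\tilde{x_A},\tilde{x_A}\rangle\leq w(T'')$, and taking the supremum over all unit $x$ finishes the proof.
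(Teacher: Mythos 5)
Your proof is correct, but it follows a genuinely different route from the paper's. The paper never touches $\langle Tx,x\rangle_B$ directly: it starts from Zamani's characterization $w_B(T)=\sup_{\theta\in\mathbb{R}}\|\textit{Re}_B(e^{i\theta}T)\|_B$, notes that $\textit{Re}_B(e^{i\theta}T)$ is the operator matrix with entries $\frac{1}{2}(e^{i\theta}T_{ij}+e^{-i\theta}T_{ji}^{\sharp_A})$, dominates its $B$-norm entrywise by the scalar matrix $\left(\frac{1}{2}\|e^{i\theta}T_{ij}+e^{-i\theta}T_{ji}^{\sharp_A}\|_A\right)$ via Lemma \ref{lem-3g}, and then recognizes each entry as at most $w_C\left(\begin{smallmatrix}O&T_{ij}\\T_{ji}&O\end{smallmatrix}\right)$ by Lemma \ref{lem-2} (with Lemma \ref{lem-1} identifying the diagonal entries as $w_A(T_{ii})$, and Lemma \ref{lem-4}(i) giving the symmetry that turns the norm of the dominating nonnegative matrix into its numerical radius). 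You instead refine the proof of Theorem \ref{th-1} pointwise: you keep the expansion of $|\langle Tx,x\rangle_B|$ and replace the crude off-diagonal bound by the pairwise inequality $|\langle T_{ij}x_j,x_i\rangle_A|+|\langle T_{ji}x_i,x_j\rangle_A|\leq 2\,w_C\left(\begin{smallmatrix}O&T_{ij}\\T_{ji}&O\end{smallmatrix}\right)\|x_i\|_A\|x_j\|_A$, proved with the weighted test vector $y=(b\,e^{i\gamma}x_i,\,a\,x_j)^t$; your computation of $\langle Sy,y\rangle_C$, the phase alignment $\gamma=\frac{1}{2}(\arg\langle T_{ij}x_j,x_i\rangle_A-\arg\langle T_{ji}x_i,x_j\rangle_A)$, and $\|y\|_C^2=2a^2b^2$ all check out, and you correctly flag that the weighting (rather than the naive vector giving $a^2+b^2$) is what makes the estimate reassemble into $\langle T''\tilde{x_A},\tilde{x_A}\rangle$. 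What each approach buys: yours is self-contained and more elementary — it needs only Lemma \ref{lem-4}(i) for symmetry and actually establishes the stronger pointwise statement $|\langle Tx,x\rangle_B|\leq\langle T''\tilde{x_A},\tilde{x_A}\rangle$ for every unit vector $x$, exactly parallel to Theorem \ref{th-1}; the paper's argument is shorter given the machinery already built (Lemmas \ref{lem-1}, \ref{lem-2}, \ref{lem-3g}) and mirrors the Abu-Omar--Kittaneh proof it generalizes, but it passes through the real-part characterization and entrywise norm monotonicity rather than through test vectors. One small point of bookkeeping in your write-up: the degenerate case $a=0$ or $b=0$ is even more immediate than you say, since $A>0$ here makes $\|\cdot\|_A$ a genuine norm, so $\|x_i\|_A=0$ forces $x_i=0$.
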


\begin{proof}
Let $\theta \in \mathbb{R}.$ Then we have
\begin{eqnarray*}
\|Re(e^{i\theta}T)\|_B &=& \Big\|\Big(\frac{1}{2}(e^{i\theta}T_{ij}+e^{-i\theta}T_{ji}^{\sharp_A})\Big)\Big\|_B\\
&\leq&\Big\|\Big(\frac{1}{2}\|e^{i\theta}T_{ij}+e^{-i\theta}T_{ji}^{\sharp_A}\|_A\Big)\Big\|, ~~\textit{using Lemma}~~ \ref{lem-3g}\\
&\leq& \left \| \left(w_C\left(\begin{array}{cc}
O&T_{ij}\\
T_{ji}& O
\end{array}\right)\right) \right\|, ~~\textit{using Lemma}~~ \ref{lem-2}\\
&=&w\left ( \left(w_C\left(\begin{array}{cc}
O&T_{ij}\\
T_{ji}& O
\end{array}\right)\right) \right), ~~\textit{using Lemma}~~ \ref{lem-4}~ (i).\\
\end{eqnarray*}
Using Lemma \ref{lem-1} and taking supremum over $\theta \in \mathbb{R}$, we get the required inequality of the theorem.
\end{proof}

\begin{remark}
 In particular, if we take $A=I$ in Theorem \ref{th-2} then we obtain the inequality obtained in \cite[Th. 2]{AK}.
\end{remark}

As a consequence of Theorem \ref{th-2}, we get the following inequality.

\begin{corollary}\label{cor-2}
Let $T=(T_{ij})$ be a $2 \times 2$ operator matrix with $T_{ij}\in B_A(\mathbb{H})$ and $B=$ \textit{diag}$(A,A)$ be the $2 \times 2$ diagonal operator matrix,  where $A> 0$. Then
\[w_B(T)\leq \frac{1}{2}\left[ w_A(T_{11})+w_A(T_{22})+\sqrt{\Big(w_A(T_{11})-w_A(T_{22})\Big)^2+4w^2_B(T_0)}\right],\]
where $T_0=\left(\begin{array}{cc}
O&T_{12}\\
T_{21}& O
\end{array}\right).$
\end{corollary}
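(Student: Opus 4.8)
The plan is to apply Theorem~\ref{th-2} in the special case $n=2$ and then evaluate the numerical radius of the resulting scalar matrix via the spectral-radius formula of Lemma~\ref{l-10}. First I would specialize Theorem~\ref{th-2} to $n=2$. The key observation is that here the diagonal matrix $B=\textit{diag}(A,A)$ coincides with the $2\times 2$ matrix $C=\left(\begin{smallmatrix}A&O\\O&A\end{smallmatrix}\right)$, so that $w_C=w_B$ on $2\times 2$ operator matrices. Hence the matrix $T''$ produced by Theorem~\ref{th-2} has diagonal entries $w_A(T_{11})$ and $w_A(T_{22})$, while its two off-diagonal entries are $w_C\left(\begin{smallmatrix}O&T_{12}\\T_{21}&O\end{smallmatrix}\right)$ and $w_C\left(\begin{smallmatrix}O&T_{21}\\T_{12}&O\end{smallmatrix}\right)$. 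By Lemma~\ref{lem-4}(i) these two entries are equal, and both equal $w_B(T_0)$. Therefore Theorem~\ref{th-2} yields
\[w_B(T)\leq w(T'')=w\left(\begin{array}{cc} w_A(T_{11}) & w_B(T_0)\\ w_B(T_0) & w_A(T_{22})\end{array}\right).\]

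Next I would compute the numerical radius of this $2\times 2$ matrix, all of whose entries are non-negative real numbers. Applying Lemma~\ref{l-10}, this numerical radius equals $\tfrac12$ times the spectral radius of the symmetric matrix with entries $t'_{ij}+t'_{ji}$, namely
\[w(T'')=\frac12\, r\left(\begin{array}{cc} 2w_A(T_{11}) & 2w_B(T_0)\\ 2w_B(T_0) & 2w_A(T_{22})\end{array}\right).\]
The spectral radius of a symmetric matrix $\left(\begin{smallmatrix}a&b\\b&d\end{smallmatrix}\right)$ with $a,b,d\geq 0$ is its largest eigenvalue, $\tfrac12\big((a+d)+\sqrt{(a-d)^2+4b^2}\big)$. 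Substituting $a=2w_A(T_{11})$, $d=2w_A(T_{22})$ and $b=2w_B(T_0)$, and factoring out the resulting factor of $2$, gives
\[w(T'')=\frac12\left[w_A(T_{11})+w_A(T_{22})+\sqrt{\big(w_A(T_{11})-w_A(T_{22})\big)^2+4w^2_B(T_0)}\right],\]
which together with the previous display is exactly the asserted bound.

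The argument is essentially bookkeeping, and I do not anticipate a substantial obstacle. The two points requiring care are recognizing that $C=B$ when $n=2$, so that Lemma~\ref{lem-4}(i) applies to identify both off-diagonal entries of $T''$ as $w_B(T_0)$, and carrying out the elementary eigenvalue computation for a non-negative symmetric $2\times 2$ matrix while correctly tracking the factors of $2$ introduced by Lemma~\ref{l-10}.
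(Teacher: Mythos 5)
Your proposal is correct and follows essentially the same route as the paper: specialize Theorem~\ref{th-2} to $n=2$ (where $C=B$, so both off-diagonal entries of $T''$ equal $w_B(T_0)$ by Lemma~\ref{lem-4}(i)) and then convert $w(T'')$ into a spectral radius via Lemma~\ref{l-10}. The only difference is that you write out the $2\times 2$ eigenvalue computation and the bookkeeping of factors of $2$ explicitly, which the paper leaves implicit after reaching the spectral-radius expression.
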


\begin{proof}
From Theorem \ref{th-2} we have
\begin{eqnarray*}
w_B(T)&\leq& w\left(\begin{array}{cc}
w_A(T_{11})& w_B(T_{0})\\
w_B(T_{0})& w_A(T_{22})
\end{array}\right)\\
&=& r\left(\begin{array}{cc}
w_A(T_{11})& w_B(T_{0}) \\
w_B(T_{0})& w_A(T_{22})
\end{array}\right),~~ \mbox{using Lemma}~~ \ref{l-10}.\\
\end{eqnarray*} 
This completes the proof.
\end{proof}

To obtain the next inequality, we need the following lemma, which is a generalization of the polarization identity. 

\begin{lemma}\label{lemma-p}
Let $x,y\in \mathbb{H}$ and $A\in B(\mathbb{H})$ be such that $A\geq0.$ Then
\[\langle x,y\rangle_A=\frac{1}{4}\|x+y\|_A^2-\frac{1}{4}\|x-y\|_A^2+\frac{i}{4}\|x+iy\|_A^2-\frac{i}{4}\|x-iy\|_A^2.\]
\end{lemma}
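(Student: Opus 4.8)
The plan is to recognize that the asserted identity is nothing but the classical polarization identity transported to the semi-inner product $\langle \cdot, \cdot \rangle_A$. Positivity of $A$ enters only to guarantee that $\langle \cdot, \cdot \rangle_A$ is a genuine (Hermitian) sesquilinear form with $\|u\|_A^2 = \langle u, u\rangle_A$; crucially, no non-degeneracy is used anywhere, so the very same algebra that proves the usual polarization identity applies verbatim even though $A$ is merely positive semi-definite. First I would record the two structural facts to be used: the map $(x,y)\mapsto \langle x,y\rangle_A=\langle Ax,y\rangle$ is linear in the first variable and conjugate-linear in the second, and it is Hermitian in the sense that $\langle y,x\rangle_A=\overline{\langle x,y\rangle_A}$ (this follows from $A\geq 0$ being self-adjoint, since $\langle Ay,x\rangle=\overline{\langle x,Ay\rangle}=\overline{\langle Ax,y\rangle}$), together with $\|u\|_A^2=\langle u,u\rangle_A$ by definition of the seminorm.

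Next I would expand the four seminorms appearing on the right-hand side by sesquilinearity. Writing out the pairs, one gets
\[\|x \pm y\|_A^2 = \|x\|_A^2 + \|y\|_A^2 \pm \big(\langle x, y\rangle_A + \langle y, x\rangle_A\big),\]
\[\|x \pm iy\|_A^2 = \|x\|_A^2 + \|y\|_A^2 \pm \big(-i\langle x, y\rangle_A + i\langle y, x\rangle_A\big),\]
where I have used $\|iy\|_A^2=\|y\|_A^2$ together with $\langle x, iy\rangle_A = -i\langle x, y\rangle_A$ and $\langle iy, x\rangle_A = i\langle y, x\rangle_A$, the factors of $i$ coming precisely from conjugate-linearity in the second slot. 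Subtracting in pairs, the real terms $\|x\|_A^2+\|y\|_A^2$ cancel, and invoking $\langle y,x\rangle_A=\overline{\langle x,y\rangle_A}$ I obtain
\[\|x+y\|_A^2 - \|x-y\|_A^2 = 2\big(\langle x, y\rangle_A + \overline{\langle x, y\rangle_A}\big) = 4\,\mathrm{Re}\,\langle x, y\rangle_A,\]
\[\|x+iy\|_A^2 - \|x-iy\|_A^2 = 2\big(-i\langle x, y\rangle_A + i\,\overline{\langle x, y\rangle_A}\big) = 4\,\mathrm{Im}\,\langle x, y\rangle_A,\]
where $\mathrm{Re}$ and $\mathrm{Im}$ denote the real and imaginary parts of the complex scalar $\langle x,y\rangle_A$.

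Finally I would substitute these two differences into the right-hand side of the claimed identity, which collapses to $\mathrm{Re}\,\langle x,y\rangle_A + i\,\mathrm{Im}\,\langle x,y\rangle_A = \langle x, y\rangle_A$, as required. There is no substantive obstacle in this argument; the only point deserving a moment's care is the bookkeeping of the factors of $i$ produced by conjugate-linearity in the second argument, and the observation that at no stage does the computation divide by $\|x\|_A$ or invert $A$, so the semi-definite case is covered with no modification whatsoever.
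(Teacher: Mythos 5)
Your proof is correct: it is exactly the ``simple calculation'' that the paper invokes without writing out, namely expanding the four seminorms by sesquilinearity of $\langle \cdot,\cdot\rangle_A$, using Hermitian symmetry (from the self-adjointness of $A\geq 0$), and collapsing the result to $\mathrm{Re}\,\langle x,y\rangle_A + i\,\mathrm{Im}\,\langle x,y\rangle_A$. Your added remark that no non-degeneracy of $A$ is needed is a worthwhile observation, but the approach is the same as the paper's.
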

\begin{proof}
Proof of this lemma follows easily by  simple calculation.
\end{proof}

\begin{theorem}\label{th-p1}
Let $X,Y\in B_A(\mathbb{H})$, where $A\geq 0.$ Then
\[w_A(Y^{\sharp_A}X)\leq \frac{1}{4}\|XX^{\sharp_A}+YY^{\sharp_A}\|_A+\frac{1}{2}w_A(XY^{\sharp_A}).\]
\end{theorem}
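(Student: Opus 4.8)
The plan is to reduce the inequality to a bound for $|\langle Xx,Yx\rangle_A|$ over $A$-unit vectors and then run the standard \emph{phase-plus-polarization} argument in the semi-Hilbertian setting, using the identities $\|T\|_A^2=\|TT^{\sharp_A}\|_A$ and $\|Tx\|_A\le\|T\|_A\|x\|_A$ recorded in the introduction together with the polarization identity of Lemma \ref{lemma-p}.

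\emph{Step 1 (rewrite the left side).} For $T\in B_A(\mathbb H)$ the defining relation of the $A$-adjoint gives $\langle T^{\sharp_A}v,u\rangle_A=\langle v,Tu\rangle_A$; taking $T=Y$, $v=Xx$, $u=x$ yields $\langle Y^{\sharp_A}Xx,x\rangle_A=\langle Xx,Yx\rangle_A$. Hence
\[w_A(Y^{\sharp_A}X)=\sup_{\|x\|_A=1}\big|\langle Xx,Yx\rangle_A\big|=\sup_{\|x\|_A=1}\ \sup_{\theta\in\mathbb R}\mathrm{Re}\big(e^{i\theta}\langle Xx,Yx\rangle_A\big).\]

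\emph{Step 2 (polarization).} Fix $x$ with $\|x\|_A=1$ and $\theta\in\mathbb R$. Applying the real part of the polarization identity in Lemma \ref{lemma-p} to $e^{i\theta}Xx$ and $Yx$ and then discarding the subtracted square,
\[\mathrm{Re}\big(e^{i\theta}\langle Xx,Yx\rangle_A\big)=\frac14\big\|e^{i\theta}Xx+Yx\big\|_A^2-\frac14\big\|e^{i\theta}Xx-Yx\big\|_A^2\le \frac14\big\|(e^{i\theta}X+Y)x\big\|_A^2\le \frac14\big\|e^{i\theta}X+Y\big\|_A^2,\]
where the last step uses $\|Tx\|_A\le\|T\|_A\|x\|_A$.

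\emph{Step 3 (expand the square and finish).} Using $\|S\|_A^2=\|SS^{\sharp_A}\|_A$ with $S=e^{i\theta}X+Y$, I would compute
\[\big\|e^{i\theta}X+Y\big\|_A^2=\big\|XX^{\sharp_A}+YY^{\sharp_A}+2\,\textit{Re}_A(e^{i\theta}XY^{\sharp_A})\big\|_A\le \big\|XX^{\sharp_A}+YY^{\sharp_A}\big\|_A+2\,w_A(XY^{\sharp_A}),\]
the triangle inequality and the elementary bound $\|\textit{Re}_A(e^{i\theta}Z)\|_A\le w_A(Z)$ being used in the last inequality (this holds because $\textit{Re}_A(e^{i\theta}Z)$ is $A$-self-adjoint and $\langle \textit{Re}_A(e^{i\theta}Z)x,x\rangle_A=\mathrm{Re}(e^{i\theta}\langle Zx,x\rangle_A)$). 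Combining Steps 2 and 3, the resulting bound is free of $\theta$, so taking the supremum over $x$ gives exactly $w_A(Y^{\sharp_A}X)\le\frac14\|XX^{\sharp_A}+YY^{\sharp_A}\|_A+\frac12\, w_A(XY^{\sharp_A})$.

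\emph{Main obstacle.} The only delicate point is the algebra of $A$-adjoints in Step 3: one must verify that the two cross terms assemble into $2\,\textit{Re}_A(e^{i\theta}XY^{\sharp_A})$, i.e. that $e^{-i\theta}YX^{\sharp_A}$ is an $A$-adjoint of $e^{i\theta}XY^{\sharp_A}$. This follows from $A(e^{-i\theta}YX^{\sharp_A})=(e^{i\theta}XY^{\sharp_A})^{*}A$, which in turn uses $X^{*}A=AX^{\sharp_A}$ and its adjoint $(Y^{\sharp_A})^{*}A=AY$; since every quantity is measured in the $A$-seminorm, the projection $P$ onto $\overline{R(A)}$ occurring in $(Y^{\sharp_A})^{\sharp_A}=PYP$ when $A$ is not injective causes no difficulty.
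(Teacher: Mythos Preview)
Your proof is correct and follows essentially the same route as the paper: rewrite $\langle Y^{\sharp_A}Xx,x\rangle_A=\langle Xx,Yx\rangle_A$, apply the real part of the polarization identity (Lemma~\ref{lemma-p}) to $e^{i\theta}Xx$ and $Yx$, pass to the operator $A$-seminorm, and expand $\|e^{i\theta}X+Y\|_A^2=\|(e^{i\theta}X+Y)(e^{i\theta}X+Y)^{\sharp_A}\|_A$ followed by the triangle inequality and $\|\textit{Re}_A(e^{i\theta}XY^{\sharp_A})\|_A\le w_A(XY^{\sharp_A})$. The only cosmetic difference is that the paper bounds $|\tfrac14\|e^{i\theta}Xx+Yx\|_A^2-\tfrac14\|e^{i\theta}Xx-Yx\|_A^2|$ by the maximum of the two terms (and then treats the $+$ and $-$ cases), whereas you simply drop the subtracted square; since you are maximizing the real part rather than its absolute value, your shortcut is legitimate and slightly cleaner.
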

\begin{proof}
It is well-known that 
\begin{eqnarray*}
w_A(Y^{\sharp_A}X)&=&\sup_{\theta \in \mathbb{R}}\|\textit{Re}_A(e^{i\theta}Y^{\sharp_A}X)\|_A\\
&=&\sup_{\theta \in \mathbb{R}}w_A\left(\textit{Re}_A(e^{i\theta}Y^{\sharp_A}X)\right)\\
&=&\sup_{\theta \in \mathbb{R}}\sup_{\|x\|_A=1,x\in \mathbb{H}} \left|\langle \textit{Re}_A(e^{i\theta}Y^{\sharp_A}X)x,x\rangle_A\right|.
\end{eqnarray*}

For any $x\in \mathbb{H}, \|x\|_A=1,$ we have
\begin{eqnarray*}
\left|\langle \textit{Re}_A(e^{i\theta}Y^{\sharp_A}X)x,x\rangle_A \right|&=& \left|\textit{Re} \langle e^{i\theta}Y^{\sharp_A}Xx,x\rangle_A\right|\\
&=& \left|\textit{Re} \langle e^{i\theta}Xx,Yx\rangle_A\right| \\
&=& \left|\frac{1}{4}\|e^{i\theta}Xx+Yx\|_A^2-\frac{1}{4}\|e^{i\theta}Xx-Yx\|_A^2\right|,~~\mbox{using Lemma \ref{lemma-p}} \\
&\leq& \frac{1}{4} \max \left\{\|e^{i\theta}Xx+Yx\|_A^2,\|e^{i\theta}Xx-Yx\|_A^2 \right\} \\
&\leq& \frac{1}{4} \max \left\{\|e^{i\theta}X+Y\|_A^2,\|e^{i\theta}X-Y\|_A^2 \right\},~~\mbox{as}~ \|x\|_A=1. 
\end{eqnarray*}
Now,
\begin{eqnarray*}
\frac{1}{4}\|e^{i\theta}X+Y\|_A^2&=& \frac{1}{4}\|(e^{i\theta}X+Y)(e^{-i\theta}X^{\sharp_A}+Y^{\sharp_A})\|_A\\
&=& \frac{1}{4}\|XX^{\sharp_A}+YY^{\sharp_A}+ 2 \textit{Re}_A (e^{i\theta}XY^{\sharp_A})\|_A\\
&\leq& \frac{1}{4}\|XX^{\sharp_A}+YY^{\sharp_A}\|_A+ \frac{1}{2} \|\textit{Re}_A (e^{i\theta}XY^{\sharp_A})\|_A\\
&\leq& \frac{1}{4}\|XX^{\sharp_A}+YY^{\sharp_A}\|_A+\frac{1}{2}w_A(XY^{\sharp_A}).
\end{eqnarray*}
Similarly we can show that 
\begin{eqnarray*}
\frac{1}{4}\|e^{i\theta}X-Y\|_A^2 &\leq& \frac{1}{4}\|XX^{\sharp_A}+YY^{\sharp_A}\|_A+\frac{1}{2}w_A(XY^{\sharp_A}).
\end{eqnarray*}
Therefore, 
\begin{eqnarray*}
|\langle \textit{Re}_A(e^{i\theta}Y^{\sharp_A}X)x,x\rangle_A| &\leq& \frac{1}{4}\|XX^{\sharp_A}+YY^{\sharp_A}\|_A+\frac{1}{2}w_A(XY^{\sharp_A}).
\end{eqnarray*}
Taking supremum over $x\in \mathbb{H}, \|x\|_A=1$ we get
\[w_A\left(Re_A(e^{i\theta}Y^{\sharp_A}X)\right)\leq \frac{1}{4}\|XX^{\sharp_A}+YY^{\sharp_A}\|_A+\frac{1}{2}w_A(XY^{\sharp_A}). \]
Now taking supremum over $\theta\in \mathbb{R}$, we get
\[w_A(Y^{\sharp_A}X)\leq \frac{1}{4}\|XX^{\sharp_A}+YY^{\sharp_A}\|_A+\frac{1}{2}w_A(XY^{\sharp_A}).\]
This completes the proof of the theorem.
\end{proof}

\begin{remark}
The inequality in \cite[Th. 2.10]{SMY} follows from the inequality in Theorem \ref{th-p1} by considering $A=I.$
\end{remark}

\bibliographystyle{amsplain}

\end{document}